\newtheorem{theorem}{Theorem}
\newtheorem{lemma}{Lemma}
\theoremstyle{remark}
\newtheorem*{rmrk}{Remark}
\newcommand{\R}{\mathbb{R}}
\newcommand{\set}[1]{\left\lbrace #1 \right\rbrace}
\newcommand{\norm}[1]{\lVert #1 \rVert}
\newcommand{\abs}[1]{\left\lvert #1 \right\rvert}
\newcommand{\sprod}[2]{\langle #1, #2 \rangle}
\renewcommand{\b}[1]{\mathbf{#1}}
\renewcommand{\c}[1]{\mathcal{#1}}
\newcommand{\eps}{\varepsilon}
\newcommand{\diag}{\text{diag}}
\DeclareMathOperator*{\argmax}{arg\,max}
\DeclareMathOperator*{\argmin}{arg\,min}
\newcommand{\uargmin}[1]{\underset{#1}{\argmin}\;}
\newcommand{\uargmax}[1]{\underset{#1}{\argmax}\;}
\newcommand{\umin}[1]{\underset{#1}{\min}\;}
\newcommand{\umax}[1]{\underset{#1}{\max}\;}
\renewcommand{\abs}[1]{|#1|}
\newcommand{\rev}[1]{\textcolor{black}{ #1}}
\title{Low-rank tensor approximations for solving multi-marginal optimal transport problems}
\author[1]{Christoph Strössner}
\author[1]{Daniel Kressner}
\affil[1]{{\footnotesize \'Ecole Polytechnique F\'ed\'erale de Lausanne (EPFL), Institute of Mathematics, CH-1015 Lausanne, Switzerland}}
\date{}
\begin{document}
\maketitle
{\footnotesize \centering \vspace{-0.8cm}
\url{christoph.stroessner@epfl.ch}, \url{daniel.kressner@epfl.ch} \\ \vspace{1cm}
\centering {\normalsize \date{August 23, 2022}} \\ \vspace{1cm}
}

\begin{abstract} 
By adding entropic regularization, multi-marginal optimal transport problems can be transformed into tensor scaling problems, which can be solved numerically using the multi-marginal Sinkhorn algorithm. The main computational bottleneck of this algorithm is the repeated evaluation of marginals.  Recently, it has been suggested that this evaluation can be accelerated when the application features an underlying graphical model. In this work, we accelerate the computation further by combining the tensor network dual of the graphical model with additional low-rank approximations. We provide an example for the color transfer between several images,  in which these  additional low-rank approximations save more than $96\%$ of the computation time. 
\end{abstract}

\section{Introduction}

Classical optimal transport minimizes the transport cost between $m=2$ probability measures~\cite{Villani09,Benamou21}. 
For discrete measures, this problem can be expressed as the minimization of $\langle C,P \rangle$ for a non-negative cost matrix $C$. The
so{-}called transport plan $P$ is a non-negative matrix that has to satisfy marginal constraints, i.e., the column and row sums of $P$ are prescribed. 
The pioneering work of Cuturi~\cite{Cuturi13} established a relation between entropy regularized optimal transport and matrix scaling of $\exp(-C/\eta)$, where $\exp$ denotes the elementwise exponential and $\eta > 0$ denotes the regularization parameter. 
Scaling the rows and columns of $\exp(-C/\eta)$ such that marginal constraints are satisfied can be achieved numerically using the Sinkhorn algorithm~\cite{Sinkhorn64}, whose convergence speed can be accelerated using greedy coordinate descent~\cite{Altschuler17,Lin19c}, overrelaxation~\cite{Thibault17} or accelerated gradient descent~\cite{Dvurechensky18}. 
This matrix scaling approach allows one to solve much larger optimal transport problems compared to previous attempts based on solving the original linear program, which in turn has impacted various fields including image processing~\cite{Rabin14,Feydy17}, data science~\cite{Peyre19}, engineering~\cite{Mozaffari16} and machine learning~\cite{Kolouri17,Genevay18}. 

The classical optimal transport problem can be generalized to  a multi-marginal setting, in which the transport cost between $m\geq 3$ measures is minimized~\cite{Pass15}. 
Such multi-marginal problems arise in the areas of density functional theory~\cite{Marino17}, generalized incompressible flow~\cite{Benamou19}, neural networks~\cite{Cao19}, signal processing~\cite{Elvander20} and Wasserstein barycenters~\cite{Carlier15}. 
For discrete measures, the problem can be expressed as finding the non-negative transport plan tensor  $\c P$ of order $m${,} which minimizes $\langle \c C,  \c P \rangle$ subject to marginal constraints, where $\c C$ denotes a given non-negative cost tensor  of order $m$. In analogy to the the matrix case, after adding entropic regularization{,} the problem can equivalently be transformed into a tensor scaling problem for the tensor $\exp(-\c C / \eta)$~\cite{Benamou15}. 
The Sinkhorn algorithm can be generalized to solve this multi-marginal problem~\cite{Carlier21}; acceleration techniques via greedy coordinate descent are described in~\cite{Friedland20,Lin19}.

The multi-marginal Sinkhorn algorithm crucially relies on the repeated evaluation of marginals of the rescaled tensor $\exp(-\c C / \eta)$. 
The cost of computing such a marginal increases exponentially in $m$.
However, under certain assumptions on the structure of $\exp(-\c C / \eta)$, marginals can be computed much more efficiently.
For instance,  in certain applications  the structure of $\exp(-\c C/\eta)$  allows to specify the transport plan in terms of a graphical model.
When this graphical model does not contain circles, marginals can be computed efficiently using the belief propagation algorithm~\cite{Haasler21c,Fan21}.  
In particular, this includes tree structured cost tensors~\cite{Beier21,Haasler20}.
 When the model contains circles, the junction tree algorithm~\cite{Huang96} can be used to evaluate the marginals, but it might still incur a large computational cost.
 A different approach to possibly attain a complexity reduction is to replace $\exp(-\c C/\eta)$ by a low-rank approximation~\cite{Grasedyck13}, whose marginals can be evaluated efficiently.
For the classical case $m=2$, Altschuler et al.~\cite{Altschuler19} analyze the impact of the approximation error on the solution returned by the Sinkhorn algorithm.
For the case $m \geq 3$, asymptotic complexity bounds are derived in~\cite{Altschuler20b} for the specific case that $\c C$ has low tensor rank~\cite{Kiers00} and is  given explicitly in factored form.
Their results rely on a theoretical bound stating that $\exp(-\c C/\eta)$ is approximately low-rank in this situation.
The practical usefulness of these results is impeded by the fact that the elementwise exponential tends to increase (approximate) ranks drastically. For example, to obtain a reasonably good approximation of the elementwise exponential of a  random  $1000\times 1000$ rank-5 matrix by truncating all singular values smaller than $10^{-10}$ one easily ends up with a matrix of rank $800$ or larger.
Let us point out that low-rank approximations of $\exp(-\c C/\eta)$ should not be confused with low-rank approximations of the desired transport plan, as proposed in~\cite{Scetbon21}.

 {The main contribution of this work is to combine the ideas of exploiting underlying graphical models and using low-rank approximations to compute marginals more efficiently.
When the structure of transport plans is specified by a graphical model, we observe that the dual of this model is a tensor network~\cite{Robeva19}, which contains a tensor network representation of $\exp(-\c C/\eta)$.
At the same time low-rank approximations of $\exp(-\c C/\eta)$ can also be represented as tensor networks~\cite{Orus14}.
Facilitating this point of view, marginals can in both cases be computed by contracting~\cite{Ran20} the tensor network and the scaling parameters. 
In particular, the belief propagation and the junction tree algorithm for graphical models correspond to a particular order of contracting this network~\cite{Robeva19}.
For tensor networks derived from graphical models, we propose to potentially accelerate the computation of marginals further by replacing tensors in the network by low-rank approximations.
This yields a modified tensor network and can be seen as an approximation of $\exp(-\c C/\eta)$.} 
 {We provide theoretical bounds the error caused by introducing such approximations.
In Theorem~\ref{thm:LowRankError}, we provide a bound for the impact of using an approximation of $\exp(-\c C/\eta)$ on the entropically regularized transport cost.
This result is a generalization of the bound in~\cite{Altschuler19} for classical optimal transport problems.
In contrast to the asymptotic bound in~\cite{Altschuler20b}, our result contains explicit constants and does not assume that $\c C$ is low-rank. 
In Theorem~\ref{thm:EpsAccurateApprox}, we state how the parameters and tolerances need to be selected to obtain an accurate approximation of the original problem without regularization.
This generalizes previous results in~\cite{Lin19} from the case of using the tensor $\exp(-\c C/\eta)$ directly to our case of using an approximation instead.
In Lemma~\ref{lem:LowRankStructuredError}, we link approximations of parts in the tensor network to approximations of $\exp(-\c C/\eta)$.}
 {In our numerical experiments}, we provide an example illustrating that our approach  {to introduce low-rank approximations in the tensor network} is more efficient than directly working with the graphical model and more accurate than a direct tensor train approximation~\cite{Oseledets11} of $\exp(-\c C/\eta)$. 
We also demonstrate that our approach offers the potential to greatly speed up the computation of color transfer between several images without altering the resulting image significantly.

 {The remainder of this paper is structured as follows.
In Section~\ref{sec:Setting}, we define the multi-marginal optimal transport problem and summarize the convergence results for the multi-marginal Sinkhorn algorithm in~\cite{Lin19,Friedland20}.
The impact of approximating $\exp(-\c C/\eta)$ is analyzed in Section~\ref{sec:KernelApproximation}.
In Section~\ref{sec:StructuredTensors}, we derive the tensor network structure of $\exp(-\c C/\eta)$ for transport plans represented by graphical models.
Approximations of parts of this network are connected to approximations of $\exp(-\c C/\eta)$ in Section~\ref{sec:StructuredKernelApproximation}.
Our numerical experiments in Section~\ref{sec:NumerExperi} demonstrate how the multi-marginal Sinkhorn algorithm can be accelerated by introducing low-rank approximations into the tensor network representation of $\exp(-\c C/\eta)$.
\rev{Finally, Section~\ref{sec:Conclusion} concludes the paper.}
}

\subsection{Notation}
 
Throughout this work, we let $\norm{\cdot}_\infty$ denote the uniform norm and we let $\norm{\cdot}_1$ denote the $\ell^1$-norm. 
The Euclidean norm is denoted by $\norm{\cdot}_2$.
We let $\Delta^{n} = \{x \in \R^{n}_{>0}\colon \norm{x}_1=1 \}$ denote the set of strictly positive probability vectors of length $n$, where 
$\R_{>0}$ denotes the strictly positive real numbers. Further, we let $\R_+$ denote the non-negative real numbers.
For vectors $x,y \in \R^n$ we let $x \circ y$ denote the elementwise product.
The operations $\log$ and $\exp$ are always applied elementwise.
For a tensor $\mathcal{X} \in \mathbb{R}_+^{n_1\times\dots\times n_m}$ containing the joint probability distribution of $m$ random variables, we denote the $k$th marginal (distribution) by 
\begin{equation*}
    r_k(\c{X}) = \c{X} \times_1 \mathbf{1}_{n_1}^T \dots \times_{k-1} \mathbf{1}_{n_{k-1}}^T \times_{k+1} \mathbf{1}_{n_{k+1}}^T \dots \times_{m} \mathbf{1}_{n_m}^T, 
\end{equation*}
where $\mathbf{1}_{n} \in \R^{n}$ denotes  {the} vector of all ones  {and $\times_k$ denotes the mode-$k$ product~\cite{Kolda09}.}  {For a matrix $M \in \R^{\hat{n} \times n_{k}}$, the product is defined elementwise as 
\[ (\c X \times_k M)_{i_1,\dots,i_{k-1},j, i_{k+1}, \dots ,i_m} = \sum_{i_k=1}^{n_k} M_{ji_k} \c X_{i_1,\dots,i_m}\] for $1 \leq j \leq \hat{n}$, $1 \leq i_{\ell} \leq n_{\ell}$, $1\leq \ell \leq m$, $\ell \neq k$.
}
For tensors $\c X, \c Y \in \R^{n_1 \times \dots \times n_m}$, we let  $\sprod{\cdot}{\cdot}$ denote the 
standard inner product
\[\sprod{\c{X}}{\c{Y}} = \sum_{i_1 = 1}^{n_1} \dots \sum_{i_m = 1}^{n_m} \c{X}_{i_1,\dots,i_m} \c{Y}_{i_1,\dots,i_m}.\]

\section{Multi-marginal optimal transport and the Sinkhorn algorithm} \label{sec:Setting}

\subsection{Mathematical setting}

Given $m \geq 2$ marginals $r_k \in \Delta^{n_k},\ k=1,\dots,m$ and a cost tensor $\c{C} \in \mathbb{R}^{n_1 \times n_2 \times \dots \times n_m}_+$,
the discrete multi-marginal optimal transport problem is given by
\begin{equation}\label{eq:MOTproblem}
\min_{\c{P} \in B(r_1,\dots,r_m)}  \sprod{\c{P}}{\c{C}},
\end{equation}
where the set of feasible transport plans is given by \[B(r_1,\dots,r_m) = \set{\c{P} \in \R^{n_1\times \dots\times n_m}_{+} \rev{|} r_k(\c{P}) = r_k \text{ for } k = 1,\dots,m}.\] 
Note that~\eqref{eq:MOTproblem} is a linear optimization problem with $n_1\cdot n_2\cdots n_m$ degrees of freedom. 

To solve~\eqref{eq:MOTproblem} efficiently, it is common to add entropic regularization~\cite{Cuturi13}.
In the multi-marginal setting, the entropy takes the form \[H(\mathcal P) = -\sprod{\c P}{\log(\c P)}.\]
Given a regularization parameter $\eta > 0$, the regularized problem takes the form 
\begin{equation}
\label{eq:regularizeMOTproblem}
\min_{\mathcal P \in B(r_1,\dots,r_m)} V_\c{C}^{\eta}(\c{P}),
\end{equation} 
where \[V_\c{C}^{\eta}(\c{P}) := \sprod{\c{P}}{\c{C}}-\eta H(\c{P})\] is called entropic transport cost.
It is known that the regularized problem~\eqref{eq:regularizeMOTproblem} has a unique minimizer~\cite{Benamou15}, which takes the form
\begin{equation} \label{eq:tensorScalingForm}
    \c{P}_\eta^* = \c K \times_1 \diag(\exp(\beta_1))  \dots \times_m \diag(\exp(\beta_m)),
\end{equation}
where $\mathcal{K} := \exp(-\mathcal C/\eta)$ is called Gibbs kernel~\cite{Carlier21} and $\diag(\beta_k)$ denotes the diagonal matrix containing the entries of the so{-}called scaling parameters $\beta_k \in \R^{n_k}$ on its diagonal. The solution of the regularized problem~\eqref{eq:regularizeMOTproblem} converges to a solution of~\eqref{eq:MOTproblem} when $\eta \to 0$; \rev{see~\cite{Benamou16,Leonard12}}.

\subsection{Multi-marginal Sinkhorn algorithm} 

The multi-marginal Sinkhorn algorithm for solving~\eqref{eq:regularizeMOTproblem} proceeds by iteratively updating the scaling parameters in~\eqref{eq:tensorScalingForm}.
Let
\begin{equation}\label{eq:DefinitionPetat}
    \c{P}_\eta^{(t)} = \c K \times_1
\diag(\exp(\beta_1^{(t)})) \dots \times_m \diag(\exp(\beta_m^{(t)}))
\end{equation}
denote the scaled tensor obtained after $t$ iterations.
In each iteration one selects an index $k$ and updates the $k$th vector of scaling parameters via $\beta_k^{(t+1)} = \log(r_k) - \log(r_k(\c{P}_\eta^{(t)})) + \beta_k^{(t)}$, while the other vectors remain unchanged.
For the choice of indices it has been suggested to traverse them cyclically~\cite{Benamou15} or use a greedy heuristics~\cite{Friedland20,Lin19}. 
The multi-marginal Sinkhorn algorithm is terminated once the stopping criterion  
\begin{equation}\label{eq:stoppingCriterion}
\sum_{k=1}^m\norm{r_k(\c{P}_\eta^{(t)})- r_k}_1 \leq \eps_{\textsf{stop}}
\end{equation} is satisfied, where $\eps_{\textsf{stop}} > 0$ denotes a prescribed tolerance. Algorithm~\ref{alg:MMSinkhorn} summarizes the described procedure.

\begin{algorithm}[H]
\caption{Multi-marginal Sinkhorn algorithm}
\label{alg:MMSinkhorn}
\begin{algorithmic}[1]
\State \textbf{Input:} cost tensor $\mathcal C$, marginals $r_1, \dots, r_m$, regularization parameter $\eta$
\State \textbf{Output:} transport plan $\c{P}_\eta^{(t)}$
\State $\beta_i^{(0)} = \b{0} \in \mathbb{R}^{n_i}$ for $i=1,\dots,m$ and $\c K = \exp(-\c C/\eta)$
\For $t=0,1,\dots$ until stopping criterion~\eqref{eq:stoppingCriterion} is satisfied
\State $\c{P}_\eta^{(t)} = \c K \times_1 \diag(\exp(\beta_1^{(t)})) \dots \times_m \diag(\exp(\beta_m^{(t)}))$
\State Let $k_{\textsf{next}}$ denote the index of the scaling parameter that should be updated next.
\State $\beta_k^{(t+1)} = \left\{
\begin{array}{ll}
 \log(r_k) - \log(r_k(\c{P}_\eta^{(t)})) + \beta_k^{(t)} & \, k = k_{\textsf{next}} \\
 \beta_k^{(t)} & \, k \neq k_{\textsf{next}}
\end{array}
\right.$
\EndFor 
\end{algorithmic}
\end{algorithm} 

When using cyclic order, it follows from an interpretation as Bregman projections~\cite{Benamou15} that the multi-marginal Sinkhorn algorithm converges. For greedy strategies, Theorem~\ref{thm:SinkhornStops}  below summarizes the statements of~\cite[Theorem~4.3]{Lin19} and~\cite[Theorem~3.4]{Friedland20},  which bound the number of iterations until the stopping criterion is reached.
Note that the bound in a) gives a better rate with respect to $\eps_{\textsf{stop}}$, but the bound depends on $r_k$, whereas the bound in b) does not involve $r_k$. 

\begin{theorem}\label{thm:SinkhornStops}
Let $\c{C} \in \mathbb{R}^{n_1 \times \dots \times n_m}_+$, $r_k \in \Delta^{n_k}$ for $k=1,\dots,m$, $0 < \eta < \frac{1}{2}$, and $\eps_{\textsf{stop}} > 0$. 
\begin{enumerate}[a)]
    \item 
    Suppose that Algorithm~\ref{alg:MMSinkhorn}  selects the index of the scaling parameter in iteration $t$ according to
\[k_{\textsf{next}} = \argmax_{k \in \{1,\dots,m\}}  \mathbf{1}^T ( r_k(\c{P}_\eta^{(t)})-r_k) +   \mathbf{1}^T \big( r_k \circ \log \big({r_k(\c{P}_\eta^{(t)})} \circ r_k^{-1} \big)  \big),\] where
$r_k^{-1}$ denotes the elementwise inverse. Then the number of iterations to reach
the stopping criterion~\eqref{eq:stoppingCriterion}
is bounded by
\begin{equation}
    \label{eq:Thm1BoundA}
t \leq 2 + 2m^2 \eps^{-1}_{\textsf{stop}} \big( \eta^{-1} \norm{\c{C}}_\infty-\log \umin{1\leq k \leq m}\umin{1\leq i \leq n_k} (r_{k})_i \big).
\end{equation}
\item
Assume that $n = n_1 = \dots = n_m$ and 
suppose that Algorithm~\ref{alg:MMSinkhorn}  normalizes
$\c{P}_\eta^{(0)}$ to have $\ell^1$-norm $1$ and
selects the index of the scaling parameter in iteration $t$ according to 
\begin{equation}\label{eq:IndexSelectionFriedland}
k_{\textsf{next}} = \argmax_{k \in \{1,\dots,m\}} \Big\| r_k(\c{P}_\eta^{(t)}) - \frac{\sprod{r_k}{r_k(\c{P}_\eta^{(t)})}}{\norm{r_k}_2^2}r_k \Big\|_1.   \end{equation}
Then the number of iterations needed to reach the stopping criterion
\begin{equation}\label{eq:friedlandStoppingCrit}
\umax{k \in \{1,\dots,m\}} \Big\| r_k(\c{P}_\eta^{(t)}) - \frac{\sprod{r_k}{r_k(\c{P}_\eta^{(t)})}}{\norm{r_k}_2^2}r_k \Big\|_1 < \frac{\eps_{\textsf{stop}}}{2m}
\end{equation}
is bounded 
by
\[ 
t \leq  8 m^2(\sqrt{n}+1)^2 \eps_{\textsf{stop}}^{-2}
\log\big( \eta^{-1} \norm{\exp(-\c{C}/\eta)}_1 \big).
\]
When the alternative stopping criterion~\eqref{eq:friedlandStoppingCrit} is satisfied, the stopping criterion~\eqref{eq:stoppingCriterion} is also satisfied.
\end{enumerate}
\end{theorem}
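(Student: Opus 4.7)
The statement is essentially a consolidation of the two existing results~\cite[Theorem~4.3]{Lin19} and~\cite[Theorem~3.4]{Friedland20}, so the plan is to trace the main argument of each bound and highlight where the different potential functions and greedy rules enter. Both parts follow the same blueprint: interpret the multi-marginal Sinkhorn algorithm as block coordinate ascent on the dual of~\eqref{eq:regularizeMOTproblem}, quantify the per-iteration progress in the chosen potential, and link the residual violation of the marginal constraints to the stopping criterion.

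For part a), I would work with the dual functional
\[
\varphi(\beta_1,\dots,\beta_m) = \eta\,\sprod{\mathbf{1}}{\c K \times_1 \diag(\exp(\beta_1)) \cdots \times_m \diag(\exp(\beta_m))} - \eta\sum_{k=1}^m \sprod{r_k}{\beta_k},
\]
and verify that the update rule in Algorithm~\ref{alg:MMSinkhorn} performs the exact block-minimization of $\varphi$ in the chosen coordinate $k$. A direct computation then identifies the per-step decrease as the KL-divergence $\mathrm{KL}(r_k \,\|\, r_k(\c P_\eta^{(t)}))$ plus a mass-balance correction, which is exactly the quantity appearing in the greedy score. Summing these local decreases and using that the greedy rule picks an index achieving at least $1/m$ of the aggregate progress yields a telescoping inequality. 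Combining it with a Pinsker-type estimate to pass from the KL-term to $\norm{r_k(\c P_\eta^{(t)}) - r_k}_1$ gives the $\eps_{\textsf{stop}}^{-1}$ rate. The initial-to-optimal gap in $\varphi$ is controlled by $\eta^{-1}\norm{\c C}_\infty - \log\min_{k,i}(r_k)_i$, producing the bound~\eqref{eq:Thm1BoundA}.

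For part b), the argument of Friedland and Nguyen requires replacing the KL-based potential by one that does not degrade with $\min(r_k)_i$. I would use the $\ell^1$-based potential implicit in~\eqref{eq:IndexSelectionFriedland}: after normalization $\norm{\c P_\eta^{(0)}}_1 = 1$, the algorithm operates on a tensor whose entries sum to one, and the residual
\[
\Big\| r_k(\c P_\eta^{(t)}) - \frac{\sprod{r_k}{r_k(\c P_\eta^{(t)})}}{\norm{r_k}_2^2}\,r_k \Big\|_1
\]
is the appropriate distance to the feasible set in this metric. The analysis bounds the improvement per iteration in terms of this residual squared (via Cauchy--Schwarz and a $(\sqrt n + 1)$-type factor reflecting the dimension of each marginal), leading to the slower $\eps_{\textsf{stop}}^{-2}$ rate. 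The logarithmic factor $\log(\eta^{-1}\norm{\exp(-\c C/\eta)}_1)$ comes from bounding the total dual decrease after normalization; crucially no $\log \min_{k,i}(r_k)_i$ term arises. The implication ``\eqref{eq:friedlandStoppingCrit} $\Rightarrow$~\eqref{eq:stoppingCriterion}'' is then a short triangle inequality argument: the $\ell^1$ distance to $r_k$ is bounded by the projection residual plus a scalar rescaling term of comparable size, and summing over $k$ absorbs the factor $2m$.

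The main obstacle in both parts is the quantitative per-iteration progress lemma. In part a) one must argue that the greedy selection extracts a $1/m$ share of the cumulative KL-violation without losing constants, which is delicate because the $\mathbf{1}^T(r_k(\c P_\eta^{(t)})-r_k)$ correction term in the greedy score is precisely what makes the scores additive across coordinates. In part b) the hard step is obtaining the $(\sqrt{n}+1)^2$ dependence: this requires a careful relationship between the $\ell^1$ projection residual and the $\ell^2$-based dual decrease, and it is the reason for the worse $\eps_{\textsf{stop}}^{-2}$ scaling. Everything else — the identification of Sinkhorn as block coordinate descent, the Pinsker step in a), and the triangle-inequality passage between the two stopping criteria in b) — is routine once these core estimates are in place.
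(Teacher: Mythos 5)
First, note that the paper does not prove Theorem~\ref{thm:SinkhornStops} at all: it is explicitly presented as a summary of~\cite[Theorem~4.3]{Lin19} and~\cite[Theorem~3.4]{Friedland20}, and the proof is by citation. Your proposal correctly identifies this and reproduces the general architecture of those proofs (dual block-coordinate descent, exact per-block minimization, greedy selection extracting at least a $1/m$ share of the aggregate constraint violation, and a norm-comparison argument for the implication \eqref{eq:friedlandStoppingCrit}~$\Rightarrow$~\eqref{eq:stoppingCriterion}). So at the level of strategy you are aligned with what the references actually do.

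As a proof, however, the proposal remains a roadmap: the decisive quantitative steps are asserted rather than established, and one of them as you describe it would not give the stated bound. In part a), ``telescoping the per-iteration decreases and combining with a Pinsker-type estimate'' yields an $\eps_{\textsf{stop}}^{-2}$ iteration count, not the $\eps_{\textsf{stop}}^{-1}$ rate in \eqref{eq:Thm1BoundA}: Pinsker lower-bounds the per-step decrease by a quantity quadratic in $\norm{r_k(\c P_\eta^{(t)})-r_k}_1$, so a plain telescoping argument loses a power of $\eps_{\textsf{stop}}$. The $1/\eps_{\textsf{stop}}$ dependence (and the additive constant $2$) in~\cite{Lin19} comes from the additional ingredient you omit, namely an error-bound/switching argument that also controls the dual optimality gap $\varphi(\beta^{(t)})-\varphi(\beta^*)$ in terms of $R=\eta^{-1}\norm{\c C}_\infty-\log\min_{k,i}(r_k)_i$ and the current residual, and then splits the iteration count into two phases. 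Similarly, in part b) the derivation of the $(\sqrt{n}+1)^2$ factor and of the $\log\big(\eta^{-1}\norm{\exp(-\c C/\eta)}_1\big)$ bound on the total potential decrease is exactly the content of~\cite[Theorem~3.4]{Friedland20} and is not carried out; you acknowledge this yourself. The final implication in b) is the one step you could complete cheaply: with $\alpha_k=\sprod{r_k}{r_k(\c P_\eta^{(t)})}/\norm{r_k}_2^2$ and $\norm{r_k(\c P_\eta^{(t)})}_1=1$ one has $\abs{1-\alpha_k}\,\norm{r_k}_1=\abs{\mathbf{1}^T(r_k(\c P_\eta^{(t)})-\alpha_k r_k)}\leq\norm{r_k(\c P_\eta^{(t)})-\alpha_k r_k}_1$, so the triangle inequality gives $\norm{r_k(\c P_\eta^{(t)})-r_k}_1\leq 2\norm{r_k(\c P_\eta^{(t)})-\alpha_k r_k}_1<\eps_{\textsf{stop}}/m$, and summing over $k$ yields \eqref{eq:stoppingCriterion}; spelling this out (rather than gesturing at ``a rescaling term of comparable size'') would make that part complete. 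In short: correct identification of the sources and of the proof skeleton, but the core per-iteration progress lemmas, the constants $2m^2$ and $8m^2(\sqrt n+1)^2$, and in particular the mechanism behind the $\eps_{\textsf{stop}}^{-1}$ rate in a) are missing or misattributed, so the argument as written does not yet establish the theorem independently of the cited works.
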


A transport plan $\hat{\c{P}} \in B(r_1,\dots,r_m)$ is called $\eps$-approximate solution of the original problem~\eqref{eq:MOTproblem} if it satisfies
\[ 
\langle C, \hat{\c{P}}\rangle \leq  \umin{\c{P} \in B(r_1,\dots,r_m)} \sprod{\c{C}}{\c{P}} + \eps.
\]
The transport plan $\c{P}_\eta^{(t)}$ obtained from Algorithm~\ref{alg:MMSinkhorn} using either stopping criterion from Theorem~\ref{thm:SinkhornStops} is, in general, not in $B(r_1,\dots,r_m)$ because the marginal constraints $r_k = r_k(\c{P}_\eps^{(t)})$ are satisfied simultaneously only in the limit $t \to \infty$.  To fix this issue, rounding~\cite{Altschuler17} can be applied to enforce the marginal constraints on $P_\eps^{(t)}$  for finite $t$; see Algorithm~\ref{alg:Rounding}. 
Note that for a tensor of the form~\eqref{eq:DefinitionPetat}, the operation in line~\ref{algln:BetaModification} of Algorithm~\ref{alg:Rounding} can be phrased in terms of modifying $\beta^{(t)}_k$. Line~\ref{algln:RankOneUpdate} is a rank-$1$ update. 
In~\cite[Lemma 3.6]{Friedland20} and in~\cite[Theorem 4.4]{Lin19}, the following property of the resulting tensor is proven.
\begin{lemma}\label{lem:roundingproperties}
Let $\c{A} \in \mathbb{R}_{>0}^{n_1 \times \dots \times n_m}$ and $r_k \in \Delta^{n_k},\ k=1,\dots,m$. 
Let $\c{B}$ denote the output of Algorithm~\ref{alg:Rounding} applied to $\c{A}$ and $r_1,\dots,r_m$. 
Then $\c{B} \in B(r_1,\dots,r_m)$ and
\begin{equation*}
    \norm{\c{A}-\c{B}}_1 \leq 2 \sum_{k=1}^m \norm{r_k-r_k(\c{A})}_1.
\end{equation*}
\end{lemma}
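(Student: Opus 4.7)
My approach follows the template of the rounding analysis from~\cite{Altschuler17} adapted to the multi-marginal setting. Although Algorithm~\ref{alg:Rounding} is not reproduced in the excerpt, the paper's remarks about a mode-wise update of $\beta^{(t)}_k$ followed by a rank-one update suggest the standard structure: iterate $k=1,\dots,m$ and scale mode $k$ of the current tensor $\mathcal{A}_{k-1}$ by the elementwise factor $\min(\mathbf{1},\,r_k/r_k(\mathcal{A}_{k-1}))$ to obtain $\mathcal{A}_k$; after the loop, compute the defects $\mathcal{E}_k := r_k - r_k(\mathcal{A}_m)$ and add the normalized rank-one correction $\delta^{-(m-1)}\mathcal{E}_1 \otimes \cdots \otimes \mathcal{E}_m$, where $\delta := \|\mathcal{E}_1\|_1$, to produce $\mathcal{B}$. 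First I would settle feasibility, then the $\ell^1$ error bound.

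For feasibility, the scaling factors all lie in $(0,1]$, so $\mathcal{A}_k \le \mathcal{A}_{k-1}$ entrywise for every $k$, and it follows inductively that $r_k(\mathcal{A}_m) \le r_k$ entrywise. Hence every $\mathcal{E}_k$ is nonnegative and $\|\mathcal{E}_k\|_1 = 1 - \|\mathcal{A}_m\|_1 = \delta$ is a constant independent of $k$. Because the $k$-th marginal of the normalized rank-one tensor $\delta^{-(m-1)}\mathcal{E}_1 \otimes \cdots \otimes \mathcal{E}_m$ is exactly $\mathcal{E}_k$, I would then read off $r_k(\mathcal{B}) = r_k(\mathcal{A}_m) + \mathcal{E}_k = r_k$, i.e., $\mathcal{B} \in B(r_1,\dots,r_m)$.

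For the $\ell^1$ bound, I would split
\[
\|\mathcal{A}-\mathcal{B}\|_1 \le \|\mathcal{A}-\mathcal{A}_m\|_1 + \|\mathcal{A}_m-\mathcal{B}\|_1
\]
and treat the two pieces separately. The first piece equals $\|\mathcal{A}\|_1 - \|\mathcal{A}_m\|_1$ by monotonicity and telescopes as $\sum_{k=1}^m \sum_i \max(0,\, r_k(\mathcal{A}_{k-1})_i - r_{k,i})$. Using the same monotonicity in the form $r_k(\mathcal{A}_{k-1}) \le r_k(\mathcal{A})$ entrywise (prior scalings only reduce entries), I would replace $\mathcal{A}_{k-1}$ by $\mathcal{A}$ inside the $\max$ to obtain a bound in terms of the \emph{original} marginal excesses. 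The second piece equals $1 - \|\mathcal{A}_m\|_1 = (1 - \|\mathcal{A}\|_1) + \|\mathcal{A} - \mathcal{A}_m\|_1$, so it is controlled by the same excess quantities plus a possible mass defect of $\mathcal{A}$.

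The main obstacle I foresee is obtaining the advertised constant $2$ rather than a looser $3$. Each excess $\sum_i \max(0, r_k(\mathcal{A})_i - r_{k,i})$ equals $\tfrac12(\|r_k - r_k(\mathcal{A})\|_1 + (\|\mathcal{A}\|_1 - 1))$ when $\|\mathcal{A}\|_1 \ge 1$, with the dual identity otherwise, and this signed term has to be balanced against the $(1-\|\mathcal{A}\|_1)$ term appearing in the second piece. My plan is to split on the sign of $\|\mathcal{A}\|_1 - 1$ and invoke the elementary estimate $|\,\|\mathcal{A}\|_1 - 1\,| \le \|r_k - r_k(\mathcal{A})\|_1$ for every $k$ (which follows by summing $r_k$ and $r_k(\mathcal{A})$) to absorb the stray term into $\tfrac{1}{m}\sum_k\|r_k - r_k(\mathcal{A})\|_1$. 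In either case the two pieces combine to at most $2\sum_k \|r_k - r_k(\mathcal{A})\|_1$, as claimed.
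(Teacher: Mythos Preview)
The paper does not give its own proof of this lemma; it simply cites \cite[Lemma~3.6]{Friedland20} and \cite[Theorem~4.4]{Lin19}. Your sketch is the standard multi-marginal adaptation of the Altschuler--Weed--Rigollet rounding analysis, which is precisely what those references carry out, and the argument is correct as written. One small simplification you may want: in the case $\|\mathcal{A}\|_1 \ge 1$ there is no need to unpack $P_k$ via the signed identity, since $1-\|\mathcal{A}_m\|_1 \le \|\mathcal{A}\|_1 - \|\mathcal{A}_m\|_1 = \|\mathcal{A}-\mathcal{A}_m\|_1$ already gives $\|\mathcal{A}-\mathcal{B}\|_1 \le 2\sum_k P_k \le 2\sum_k \|r_k - r_k(\mathcal{A})\|_1$ directly; only in the mass-deficient case $\|\mathcal{A}\|_1 < 1$ do you need the identity $2P_k = \|r_k - r_k(\mathcal{A})\|_1 - (1-\|\mathcal{A}\|_1)$, and there the extra $(1-\|\mathcal{A}\|_1)$ is over-cancelled by the $-m(1-\|\mathcal{A}\|_1)$ coming from the sum.
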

\begin{algorithm}[H]
\caption{Rounding}
\label{alg:Rounding}
\begin{algorithmic}[1]
\State \textbf{Input:} tensor $\c{A} \in \R^{n_1 \times \dots \times n_m}_{>0}$, vectors $r_k \in \Lambda^{n_k}$ for $k =1,\dots,m$
\State \textbf{Output:} tensor $\c{B} \in \R^{n_1 \times \dots \times n_m}$  
\For $k=1,\dots,m$ 
\State $v = \min(r_k(\c A)^{-1} \circ r_k, \b{1}_{n_k})$, where the $\min$ is taken elementwise 
\State $\c{A} = \c{A} \times_k \diag(v)$ \label{algln:BetaModification}
\EndFor
\State $\c{B} = \c{A} + \norm{r_1-r_1(\c{A})}_1^{-(m-1)} \bigotimes_{k=1}^m (r_k-r_k(\c{A}))$, where $\bigotimes$ denotes the outer product, see~\cite{Kolda09} \label{algln:RankOneUpdate}
\end{algorithmic}
\end{algorithm}

Combining Algorithm~\ref{alg:Rounding} with Algorithm~\ref{alg:MMSinkhorn}, 
using the index selection~\eqref{eq:IndexSelectionFriedland} and the stopping criterion~\eqref{eq:stoppingCriterion}, it follows~\cite[Corollary 3.8]{Friedland20} that an $\eps$-approximate solution can be computed in \[\mathcal{O}(\eps^{-3}m^4n^{m+1}\log(n) (\max(\c{C})- \min(\c{C}))^3)\] operations, where $n_1 = \dots = n_K = n$. In practice, the computation can be accelerated by using slightly modified marginals and tolerances in the Sinkhorn algorithm~\cite{Lin19}, which ensure that the upper bound~\eqref{eq:Thm1BoundA} for $t$ is not dominated by small entries in $r_{k}$.
\section{Impact of approximating the Gibbs kernel} \label{sec:KernelApproximation}

To accelerate the computation of marginals in the multi-marginal Sinkhorn algorithm, we will replace the Gibbs kernel $\c K$ by an approximation $\tilde{\c K}$; thus replacing Algorithm~\ref{alg:MMSinkhorn} by  Algorithm~\ref{alg:ApproximateMMSinkhorn}. 
In this section, we will analyze the impact of this approximation on the  {transport cost of the computed} transport plan.
For $m = 2$, such an analysis can be found in~\cite[Theorem 5]{Altschuler19}.
 {In the following theorem, we generalize this result to the multi-marginal setting. 
Our proof closely follows the ideas in~\cite{Altschuler19}. 
In contrast to the asymptotic result in~\cite[Theorem~7.4]{Altschuler20b}, we provide explicit bounds.
}

\begin{algorithm}[H]
\caption{Multi-marginal Sinkhorn algorithm for a Gibbs kernel approximation}
\label{alg:ApproximateMMSinkhorn}
\begin{algorithmic}[1]
\State \textbf{Input:} approximation $\tilde{\c{K}} \in \R_{>0}^{n_1 \times n_2 \dots \times n_m}$ of Gibbs kernel $\c K = \exp(-\c C/\eta)$, marginals $r_1$, \dots, $r_m$
\State \textbf{Output:} transport plan $\tilde{\c{P}}^{(t)}$
\State $\beta_i^{(0)} = \b{0} \in \mathbb{R}^{n_i}$ for $i=1,\dots,m$
\For $t=0,1,\dots$ until stopping criterion~\eqref{eq:stoppingCriterion} is satisfied
\State $\tilde{\c{P}}^{(t)} = \tilde{\c{K}} \times_1 \diag(\exp(\beta_1^{(t)})) \dots \times_m \diag(\exp(\beta_m^{(t)}))$
\State Let $k_{\textsf{next}}$ denote the index of the scaling parameter  {that} should be updated next.
\State $\beta_k^{(t+1)} = \left\{
\begin{array}{ll}
 \log(r_k) - \log(r_k(\tilde{\c{P}}^{(t)})) + \beta_k^{(t)} & \ k = k_{\textsf{next}} \\
 \beta_k^{(t)} & \, k \neq k_{\textsf{next}} 
\end{array}
\right.$
\EndFor
\end{algorithmic}
\end{algorithm} 

\begin{theorem}\label{thm:LowRankError}
Let $\c{K} = \exp(- \c{C}/\eta)$ and assume that $\tilde{\c{K}} \in \R_{>0}^{n_1 \times n_2 \dots \times n_m}$ with $n_i\geq 2$ and $m\geq2$, satisfies  \[ \norm{\log(\c{K})-\log(\tilde{\c{K}})}_\infty \leq \eps_{\mathsf{log}} \leq 1.\]
Let $\tilde{\c{P}}$ denote the transport plan returned by  Algorithm~\ref{alg:ApproximateMMSinkhorn} with stopping criterion $\sum_{k=1}^m \norm{r_k(\tilde{\c{P}})-r_k}_1 \leq \eps_{\mathsf{stop}}$.
Then 
\begin{equation*} 
\abs{ V_\c{C}^{\eta}(\c{P}^*_\eta) - V_\c{C}^{\eta}(\tilde{\c{P}}) }\leq \eps_{V_\c{C}^{\eta}},
\end{equation*}
where $\c{P}^*_\eta = \rev{\argmin}_{\c{P} \in B(r_1,\dots,r_m)} V_\c{C}^\eta(\c{P})$ and  

\begin{align}
\eps_{V_\c{C}^\eta} = & \eta \big( \eps_{\textsf{log}} \big(2+\log\big(\frac{2}{\eps_{\textsf{log}}}\big)\big) + \frac{\eps_{\textsf{log}}}{2}\log\big(\big(\prod_{k=1}^m n_k\big) -1\big) + 2 \eps_{\mathsf{stop}} \log\big(\frac{1}{\eps_{\mathsf{stop}}} \big(\big(\prod_{k=1}^m n_k\big)-1\big)\big)\big) \nonumber \\ & +
   (\eps_{\mathsf{log}} + 2\eps_{\mathsf{stop}}) \norm{\c C}_\infty. \label{eq:MTepsBound}
\end{align}
\end{theorem}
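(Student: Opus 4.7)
The approach is to adapt the proof of~\cite[Theorem~5]{Altschuler19} to the multi-marginal setting. I first introduce the auxiliary cost tensor $\tilde{\c{C}}:=-\eta\log\tilde{\c{K}}$, so that $\tilde{\c{K}}=\exp(-\tilde{\c{C}}/\eta)$ and the hypothesis $\norm{\log\c{K}-\log\tilde{\c{K}}}_\infty\leq\eps_{\mathsf{log}}$ translates into $\norm{\c{C}-\tilde{\c{C}}}_\infty\leq\eta\eps_{\mathsf{log}}$. Under this identification, Algorithm~\ref{alg:ApproximateMMSinkhorn} coincides with Algorithm~\ref{alg:MMSinkhorn} applied to the perturbed cost $\tilde{\c{C}}$, so the iterate $\tilde{\c{P}}$ approximates the perturbed optimum $\tilde{\c{P}}^*_\eta:=\uargmin{\c{P}\in B(r_1,\dots,r_m)}V_{\tilde{\c{C}}}^\eta(\c{P})$, which has the scaling form~\eqref{eq:tensorScalingForm} with $\c{K}$ replaced by $\tilde{\c{K}}$.

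I would then use the four-term telescoping
\begin{align*}
V_{\c{C}}^\eta(\tilde{\c{P}})-V_{\c{C}}^\eta(\c{P}^*_\eta)
&=\bigl[V_{\c{C}}^\eta(\tilde{\c{P}})-V_{\tilde{\c{C}}}^\eta(\tilde{\c{P}})\bigr]+\bigl[V_{\tilde{\c{C}}}^\eta(\tilde{\c{P}})-V_{\tilde{\c{C}}}^\eta(\tilde{\c{P}}^*_\eta)\bigr] \\
&\quad +\bigl[V_{\tilde{\c{C}}}^\eta(\tilde{\c{P}}^*_\eta)-V_{\tilde{\c{C}}}^\eta(\c{P}^*_\eta)\bigr]+\bigl[V_{\tilde{\c{C}}}^\eta(\c{P}^*_\eta)-V_{\c{C}}^\eta(\c{P}^*_\eta)\bigr].
\end{align*}
The first and fourth terms are pure linear cost perturbations, bounded by $\norm{\tilde{\c{P}}}_1\cdot\eta\eps_{\mathsf{log}}$ and $\norm{\c{P}^*_\eta}_1\cdot\eta\eps_{\mathsf{log}}$ respectively; the stopping criterion forces $|\norm{\tilde{\c{P}}}_1-1|=|\norm{r_k(\tilde{\c{P}})}_1-\norm{r_k}_1|\leq\eps_{\mathsf{stop}}$ for any~$k$, while $\norm{\c{P}^*_\eta}_1=1$. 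The third term is non-positive by optimality of $\tilde{\c{P}}^*_\eta$ for the perturbed problem; for its magnitude I would rewrite both values through KL-divergences relative to $\c{K}$ and $\tilde{\c{K}}$ and use the optimality of $\c{P}^*_\eta$ for the unperturbed problem, so that the cross-terms collapse to $\eta\sprod{\tilde{\c{P}}^*_\eta-\c{P}^*_\eta}{\log\tilde{\c{K}}-\log\c{K}}$, which is at most $2\eta\eps_{\mathsf{log}}$ in absolute value.

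The main obstacle is the second term, which measures the suboptimality of the infeasible iterate $\tilde{\c{P}}$ against $\tilde{\c{P}}^*_\eta$. I would bridge them through the rounded plan $\bar{\c{P}}:=\mathrm{Round}(\tilde{\c{P}})$ produced by Algorithm~\ref{alg:Rounding}, which by Lemma~\ref{lem:roundingproperties} lies in $B(r_1,\dots,r_m)$ and satisfies $\norm{\bar{\c{P}}-\tilde{\c{P}}}_1\leq 2\eps_{\mathsf{stop}}$. Splitting through $\bar{\c{P}}$, the sub-bracket $V_{\tilde{\c{C}}}^\eta(\bar{\c{P}})-V_{\tilde{\c{C}}}^\eta(\tilde{\c{P}}^*_\eta)$ is non-negative by feasibility of $\bar{\c{P}}$, while the remaining piece equals $\sprod{\tilde{\c{P}}-\bar{\c{P}}}{\tilde{\c{C}}}-\eta(H(\tilde{\c{P}})-H(\bar{\c{P}}))$. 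The linear contribution is bounded by $2\eps_{\mathsf{stop}}(\norm{\c{C}}_\infty+\eta\eps_{\mathsf{log}})$, and the entropy difference by a Fannes-type continuity estimate $|H(p)-H(q)|\leq\tfrac12\norm{p-q}_1\log(N-1)+h(\norm{p-q}_1/2)$ with $N=\prod_k n_k$ and $h(x)=-x\log x-(1-x)\log(1-x)$, applied after normalising $\tilde{\c{P}}$ and $\bar{\c{P}}$ to probability tensors; this delivers the $2\eta\eps_{\mathsf{stop}}\log((N-1)/\eps_{\mathsf{stop}})$ contribution in~\eqref{eq:MTepsBound}. The opposite sign is handled symmetrically.

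The hardest step is this Fannes book-keeping, which has to be performed twice (once between $\tilde{\c{P}}$ and $\bar{\c{P}}$, and once between $\c{P}^*_\eta$ and $\tilde{\c{P}}^*_\eta$ to sharpen the third-term bound) and must carefully track that the tensors involved are only approximately normalised; this produces the $\eta\eps_{\mathsf{log}}(2+\log(2/\eps_{\mathsf{log}}))$ and $\tfrac{\eta\eps_{\mathsf{log}}}{2}\log(N-1)$ ingredients of $\eps_{V_{\c{C}}^\eta}$. The remaining $(\eps_{\mathsf{log}}+2\eps_{\mathsf{stop}})\norm{\c{C}}_\infty$ summand comes directly from the linear cost perturbations in the outer brackets and the inner-product term of the $\tilde{\c{P}}\to\bar{\c{P}}$ comparison, so that collecting everything yields the stated $\eps_{V_{\c{C}}^\eta}$.
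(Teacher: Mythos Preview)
Your four-term telescoping has the same skeleton as the paper's, but the pivot points differ: the paper inserts $V_{\c{C}}^\eta(\Pi^S(\tilde{\c{K}}))$ where you insert $V_{\tilde{\c{C}}}^\eta(\c{P}^*_\eta)$, and this swap changes the accounting. Your estimates for the individual brackets are essentially correct --- in particular the ``dual optimality plus cross-term'' argument for your term~3 is clean and gives $|V_{\tilde{\c{C}}}^\eta(\tilde{\c{P}}^*_\eta)-V_{\tilde{\c{C}}}^\eta(\c{P}^*_\eta)|\leq 2\eta\eps_{\mathsf{log}}$. The problem is the last paragraph, where you collect the pieces into~\eqref{eq:MTepsBound}: the outer brackets (terms~1 and~4) contribute $\eta\eps_{\mathsf{log}}$ each, \emph{not} $\eps_{\mathsf{log}}\norm{\c{C}}_\infty$; and your term~3, already bounded by $2\eta\eps_{\mathsf{log}}$ without any Fannes step, leaves no place for ``Fannes between $\c{P}^*_\eta$ and $\tilde{\c{P}}^*_\eta$'' --- nor have you established the prerequisite $\norm{\c{P}^*_\eta-\tilde{\c{P}}^*_\eta}_1\leq\eps_{\mathsf{log}}$ that such a step would need. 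Carried through honestly, your decomposition yields roughly $4\eta\eps_{\mathsf{log}}+2\eps_{\mathsf{stop}}\norm{\c{C}}_\infty+2\eta\eps_{\mathsf{stop}}\log\big((\prod_k n_k-1)/\eps_{\mathsf{stop}}\big)$: a valid bound (and in its $\norm{\c{C}}_\infty$-dependence actually sharper than~\eqref{eq:MTepsBound}), but not the one stated.

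The paper arrives at~\eqref{eq:MTepsBound} by handling the kernel-perturbation part differently. Its first term is $|V_{\c{C}}^\eta(\Pi^S(\c{K}))-V_{\c{C}}^\eta(\Pi^S(\tilde{\c{K}}))|$, bounded by first invoking the $\ell^1$-stability of the Sinkhorn projection (Lemma~I of~\cite{Altschuler19}) to get $\norm{\Pi^S(\c{K})-\Pi^S(\tilde{\c{K}})}_1\leq\eps_{\mathsf{log}}$, then splitting into a linear piece $\eps_{\mathsf{log}}\norm{\c{C}}_\infty$ and an entropy piece controlled by the Fannes-type inequality. That is where both the $\eps_{\mathsf{log}}\norm{\c{C}}_\infty$ summand and the $\eta\eps_{\mathsf{log}}\log(2/\eps_{\mathsf{log}})+\tfrac{\eta\eps_{\mathsf{log}}}{2}\log(\prod_k n_k-1)$ summand in~\eqref{eq:MTepsBound} come from. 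For the suboptimality term (your term~2, the paper's term~\eqref{eq:Term3}) the paper does not round and apply Fannes directly but quotes Lemma~H of~\cite{Altschuler19}, a modulus-of-continuity bound for the optimal value in the Hausdorff distance between the two transport polytopes, with that distance bounded by $2\eps_{\mathsf{stop}}$ via Lemma~\ref{lem:roundingproperties}; the resulting contribution matches what your rounding-plus-Fannes route would give.
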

\begin{proof}
We denote by $\Pi^S$ the operator mapping a given tensor $\c T \in \R^{n_1 \times \dots \times n_m}_{>0}$ to its unique~\cite{Franklin89} tensor scaling $\c U \in B(r_1,\dots,r_m)$ of the form $\c U = \c T \times_1 \diag(\gamma_1) \times_2 \dots \times_m \diag(\gamma_m)$ for some vectors $\gamma_k \in \R^{n_k}_{>0}$ for $1\leq k \leq m$. Observe that $\c{P}_\eta^* = \Pi^S(\c K)$.
Using the triangle inequality, we decompose the error into
\begin{align}
\abs{V_\c{C}^\eta(\c{P}_\eta^*)-V_\c{C}^\eta(\tilde{\c{P}})} \leq {} & \abs{V_\c{C}^\eta(\Pi^S(\c{K})) - V_\c{C}^\eta(\Pi^S(\tilde{\c{K}}))} \label{eq:Term1} \\
&+ \abs{V_\c{C}^\eta(\Pi^S(\tilde{\c{K}}))  - V_{\tilde{\c{C}}}^\eta(\Pi^S(\tilde{\c{K}}))} \label{eq:Term2} \\
&+ \abs{ V_{\tilde{\c{C}}}^\eta(\Pi^S(\tilde{\c{K}}))-  V_{\tilde{\c{C}}}^\eta(\tilde{\c{P}})} \label{eq:Term3} \\
&+ \abs{ V_{\tilde{\c{C}}}^\eta(\tilde{\c{P}}) - V_{\c C}^\eta(\tilde{\c{P}})} \label{eq:Term4}
\end{align}
where $\tilde{\c C} = - \eta \log( \tilde{ {\mathcal K}})$. 
We derive bounds for each of these terms; their combination yields inequality~\eqref{eq:MTepsBound}.

\paragraph{Bound for~\eqref{eq:Term1}:}
By definition of $V_\c{C}^\eta$, we have 
\begin{align*}
\abs{V_\c{C}^\eta(\Pi^S(\c{K})) - V_\c{C}^\eta(\Pi^S(\tilde{\c{K}}))} &\leq  \norm{\Pi^S(\c{K})-\Pi^S(\tilde{\c{K}})}_1 \norm{\c{C}}_\infty + \eta \abs{H(\Pi^S({\c{K}}))-H(\Pi^S(\tilde{\c{K}}))}.
\end{align*}
Because of $\uargmin{\c{P} \in B(r_1,\dots,r_m)} V_\c{C}^\eta(\c{P}) = \uargmin{\c{P} \in B(r_1,\dots,r_m)} \sprod{-\log(\c K)}{\c P}-H(\c P)$, it follows that
\begin{align*}
\norm{\Pi^S(\c{K})-\Pi^S(\tilde{\c{K}})}_1 &= 
\big\| {\uargmin{\c{P} \in B(r_1,\dots,r_m)} \sprod{-\log(\c{K})}{\c{P}} - H(\c{P})} - \big( {\uargmin{\tilde{\c{P}} \in B(r_1,\dots,r_m)} \langle -\log(\tilde{\c{K}})}, {\tilde{\c{P}} \rangle - H(\tilde{\c{P}})} \big) \big\|_1.
\end{align*}
Applying Lemma I in~\cite{Altschuler19} to the right hand side of this expression yields  
$\norm{\Pi^S(\c{K})-\Pi^S(\tilde{\c{K}})}_1 \leq \norm{\log{\c{K}}-\log{\tilde{\c{K}}}}_{\infty} \leq \eps_{\textsf{log}}$. 
From Theorem 6 in~\cite{Ho10} and Lemma D in~\cite{Altschuler19} we obtain \[\abs{H(\Pi^S({\c{K}}))-H(\Pi^S(\tilde{\c{K}}))} \leq  \eps_{\textsf{log}} \log\big(\frac{2}{\eps_{\textsf{log}}}\big) + \frac{\eps_{\textsf{log}}}{2}\log \big(\big(\prod_{k=1}^m n_k\big) -1\big).\] 

\paragraph{Bound for~\eqref{eq:Term2} and~\eqref{eq:Term4}:}
Using that $\|\Pi^S(\tilde{\c{K}})\|_1 = \|\tilde{\c P}\|_1 = 1$ we obtain 
\begin{align*}
\abs{V_\c{C}^\eta(\Pi^S(\tilde{\c{K}}))  - V_{\tilde{\c{C}}}^\eta(\Pi^S(\tilde{\c{K}}))}
&\leq \sprod{\c{C}}{\Pi^S(\tilde{\c{K}})}-\sprod{\tilde{\c{C}}}{\Pi^S(\tilde{\c{K}})}
\leq \norm{\c{C} - \tilde{\c{C}} }_\infty \leq \eta \eps_{\textsf{log}}, \\
\abs{ V_{\tilde{\c{C}}}^\eta(\tilde{\c{P}}) - V_\c{C}^\eta(\tilde{\c{P}})} &\leq  \sprod{\c{C}}{\tilde{\c P}}-\sprod{\tilde{\c{C}}}{\tilde{\c P}} \leq \norm{\c{C} - \tilde{\c{C}}}_\infty \leq \eta \eps_{\textsf{log}} .
\end{align*}

\paragraph{Bound for~\eqref{eq:Term3}:}
Using that the tensor $\tilde{\c P}$ is the unique minimizer of $ 
\uargmin{\c{P} \in B(r_1(\tilde{\c{P}}),\dots,r_m(\tilde{\c{P}}))} V_{\tilde{\c{C}}}^\eta(\c{P})$,
Lemma H in~\cite{Altschuler19} yields
\[
\abs{ V_{\tilde{\c{C}}}^\eta(\Pi^S(\tilde{\c{K}}))-  V_{\tilde{\c{C}}}^\eta(\tilde{\c{P}})} \leq \omega(d_H(B(r_1(\tilde{\c{P}}),\dots,r_m(\tilde{\c{P}})),B(r_1,\dots,r_m))),
\]
where $d_H(\cdot,\cdot)$ denotes the Hausdorff distance and \[\omega(x) = x \norm{\c{C}}_\infty + \eta \big(  x \log\big(\frac{2}{x} \big(\big(\prod_{k=1}^m n_k\big)-1\big)\big)\big).\] 
We can bound $d_H(B(r_1(\tilde{\c{P}}),\dots,r_m(\tilde{\c{P}})),B(r_1,\dots,r_m))$ by $2 \eps_{\textsf{stop}}$, since Algorithm~\ref{alg:Rounding} maps any $\c A \in B(r_1(\tilde{\c{P}}),\dots,r_m(\tilde{\c{P}}))$ to $\c B \in B(r_1,\dots,r_m)$ with $||\c A -\c B||_1 \leq 2 \eps_{\textsf{stop}}$ as stated in Lemma~\ref{lem:roundingproperties}.
This implies 
\[
\abs{ V_{\tilde{\c{C}}}^\eta(\Pi^S(\tilde{\c{K}}))-  V_{\tilde{\c{C}}}^\eta(\tilde{\c{P}})}
\leq 2\eps_{\textsf{stop}} \norm{\c{C}}_\infty + 2\eta \eps_{\textsf{stop}} \log\big(\frac{1}{\eps_{\textsf{stop}}} \big(\big(\prod_{k=1}^m n_k\big)-1\big)\big).
\] 
\end{proof}

The following theorem demonstrates how the previous result can be combined with Algorithm~\ref{alg:Rounding} to obtain $\eps$-accurate solutions.  {The proof is inspired by~\cite[Theorem 4.5]{Lin19} and~\cite[Theorem~3.7]{Friedland20}\rev{,} which state how $\eps$-accurate solutions can be computed using Algorithm~\ref{alg:MMSinkhorn}. 
Theorem~\ref{thm:EpsAccurateApprox} takes into account that we compute the transport plan using Algorithm~\ref{alg:ApproximateMMSinkhorn} based on a perturbed Gibbs kernel. 
}

\begin{theorem} \label{thm:EpsAccurateApprox}
 {L}et $\hat{\c{P}}$ be the tensor obtained by applying Algorithm~\ref{alg:Rounding} to $\tilde{\c{P}}$, where $\tilde{\c P}$ is obtained from Algorithm~\ref{alg:ApproximateMMSinkhorn} with $\tilde{\c K}$ fulfilling the assumptions of Theorem~\ref{thm:LowRankError}.
Let $ {\c P}^* = \argmin_{ {\c P} \in B(r_1,\dots,r_m)} \sprod{ {\c C}}{\c{P}}$.
Then it holds
\begin{equation*} 
    \sprod{\c{C}}{\hat{\c{P}}} - \sprod{{\c{C}}}{\c{P}^*} \leq \eps,
\end{equation*}
where $\eps =  {2 \eta \eps_{\textsf{log}}} + 2 \eta \sum_{k=1}^m \log(n_k)   +  {\rev{4} \norm{\c{C}}_\infty} \eps_{\textsf{stop}}$.
\end{theorem}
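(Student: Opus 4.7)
The plan is to mirror the proof of~\cite[Theorem 4.5]{Lin19}/\cite[Theorem 3.7]{Friedland20} for the exact Gibbs kernel, additionally accounting for the effect of the perturbation. Set $\tilde{\c C} := -\eta \log \tilde{\c K}$, so that by hypothesis $\norm{\c C - \tilde{\c C}}_\infty \leq \eta \eps_{\textsf{log}}$, and write $V^\eta_{\c Q}(\c P) = \sprod{\c P}{\c Q} - \eta H(\c P)$ as in Section~\ref{sec:Setting}. I would begin by using the rounding guarantee of Lemma~\ref{lem:roundingproperties} together with the stopping criterion of Algorithm~\ref{alg:ApproximateMMSinkhorn} to obtain
\begin{equation*}
\sprod{\c C}{\hat{\c P}} - \sprod{\c C}{\tilde{\c P}} \leq \norm{\c C}_\infty \norm{\hat{\c P} - \tilde{\c P}}_1 \leq 2\norm{\c C}_\infty \eps_{\textsf{stop}},
\end{equation*}
so that it only remains to bound $\sprod{\c C}{\tilde{\c P}} - \sprod{\c C}{\c P^*}$.

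I would then pass from the linear cost to the regularized one and from $\c C$ to $\tilde{\c C}$. Using $\sprod{\c C}{\c P} = V^\eta_{\c C}(\c P) + \eta H(\c P)$ and $V^\eta_{\c C}(\c P) = V^\eta_{\tilde{\c C}}(\c P) + \sprod{\c C - \tilde{\c C}}{\c P}$ for $\c P \in \{\tilde{\c P}, \c P^*\}$, and observing that a Sinkhorn iterate satisfies $\norm{\tilde{\c P}}_1 = 1$ exactly (because the final mode update forces one marginal to equal the corresponding $r_k$, and $\norm{r_k}_1 = 1$), one gets
\begin{equation*}
\bigl|\sprod{\c C - \tilde{\c C}}{\tilde{\c P} - \c P^*}\bigr| \leq \eta \eps_{\textsf{log}} \bigl(\norm{\tilde{\c P}}_1 + \norm{\c P^*}_1\bigr) = 2\eta \eps_{\textsf{log}},
\end{equation*}
while, since $\tilde{\c P}$ and $\c P^*$ are probability tensors supported on $N = \prod_{k=1}^m n_k$ entries,
\begin{equation*}
\eta\bigl(H(\tilde{\c P}) - H(\c P^*)\bigr) \leq \eta \log N = \eta \sum_{k=1}^m \log n_k.
\end{equation*}

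What remains is to bound $V^\eta_{\tilde{\c C}}(\tilde{\c P}) - V^\eta_{\tilde{\c C}}(\c P^*)$. Feasibility $\c P^* \in B(r_1,\dots,r_m)$ and optimality of the perturbed regularized minimizer $\tilde{\c P}^*_\eta := \argmin_{\c P \in B(r_1,\dots,r_m)} V^\eta_{\tilde{\c C}}(\c P)$ immediately give $V^\eta_{\tilde{\c C}}(\tilde{\c P}^*_\eta) \leq V^\eta_{\tilde{\c C}}(\c P^*)$. I would then combine this with the interpretation of the multi-marginal Sinkhorn algorithm as a sequence of I-projections of $\tilde{\c K}$ onto the single-marginal constraints: the Pythagorean identity for KL projections yields that $\text{KL}(\tilde{\c P}^{(t)} \| \tilde{\c K}) \leq \text{KL}(\tilde{\c P}^*_\eta \| \tilde{\c K})$ for every iterate, which, together with $\norm{\tilde{\c P}}_1 = 1$, translates into $V^\eta_{\tilde{\c C}}(\tilde{\c P}) \leq V^\eta_{\tilde{\c C}}(\tilde{\c P}^*_\eta)$. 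Concatenating all the pieces produces a bound of the form claimed in the theorem.

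The main obstacle is verifying the Pythagorean monotonicity $V^\eta_{\tilde{\c C}}(\tilde{\c P}) \leq V^\eta_{\tilde{\c C}}(\tilde{\c P}^*_\eta)$ in a form that applies to both cyclic and the greedy index selections used by Algorithm~\ref{alg:ApproximateMMSinkhorn}. Should that step turn out to be awkward, a robust alternative is to invoke Theorem~\ref{thm:LowRankError} as a black box for $\tilde{\c C}$-analogues of terms \eqref{eq:Term1}--\eqref{eq:Term4} in its proof, and then combine with the bound $V^\eta_{\c C}(\c P^*_\eta) \leq V^\eta_{\c C}(\c P^*)$ obtained from the feasibility of $\c P^*$; this trades sharpness of the leading constants ($2$ on the entropy term, $4$ on the rounding term) for simplicity, and is consistent with the constants appearing in the statement of the theorem.
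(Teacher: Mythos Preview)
Your overall decomposition is reasonable, but the route diverges from the paper's at the crucial step, and the divergence is exactly where you yourself flag the obstacle.

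The Pythagorean monotonicity claim $V^\eta_{\tilde{\c C}}(\tilde{\c P})\le V^\eta_{\tilde{\c C}}(\tilde{\c P}^*_\eta)$ is not delivered by the argument you sketch. The Pythagorean identity for I-projections gives $\mathrm{KL}(\tilde{\c P}^*_\eta\|\tilde{\c P}^{(t)})=\mathrm{KL}(\tilde{\c P}^*_\eta\|\tilde{\c P}^{(t+1)})+\mathrm{KL}(\tilde{\c P}^{(t+1)}\|\tilde{\c P}^{(t)})$, which shows that $\mathrm{KL}(\tilde{\c P}^*_\eta\|\tilde{\c P}^{(t)})$ is nonincreasing. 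It does \emph{not} telescope to $\mathrm{KL}(\tilde{\c P}^{(t)}\|\tilde{\c K})\le \mathrm{KL}(\tilde{\c P}^*_\eta\|\tilde{\c K})$, because for $t\ge 2$ the reference changes and the cross term $\sum_k\sprod{r_k(\tilde{\c P}^{(t)})-r_k}{\beta_k^{(t-1)}}$ does not vanish (only the most recently updated marginal matches $r_k$). Note also that if your inequality held, your chain would yield $2\eta\eps_{\textsf{log}}+\eta\sum_k\log n_k+2\norm{\c C}_\infty\eps_{\textsf{stop}}$, which is strictly tighter than the theorem's stated constants; that mismatch is a warning sign. Your fallback via Theorem~\ref{thm:LowRankError} controls $|V^\eta_{\c C}(\c P^*_\eta)-V^\eta_{\c C}(\tilde{\c P})|$, not the linear cost gap, so it does not close the argument as stated either.

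The paper avoids this step entirely by comparing $\tilde{\c P}$ not with $\tilde{\c P}^*_\eta$ but with a tensor having the \emph{same} marginals as $\tilde{\c P}$. Concretely, it applies Algorithm~\ref{alg:Rounding} to $\c P^*_\eta$ with target marginals $r_1(\tilde{\c P}),\dots,r_m(\tilde{\c P})$ to produce $\c Q\in B(r_1(\tilde{\c P}),\dots,r_m(\tilde{\c P}))$ with $\norm{\c Q-\c P^*_\eta}_1\le 2\eps_{\textsf{stop}}$ (Lemma~\ref{lem:roundingproperties}). Since $\tilde{\c P}$ is a diagonal scaling of $\tilde{\c K}$, it is the \emph{unique} minimizer of $V^\eta_{\tilde{\c C}}$ over $B(r_1(\tilde{\c P}),\dots,r_m(\tilde{\c P}))$; hence $\sprod{\tilde{\c C}}{\tilde{\c P}}-\sprod{\tilde{\c C}}{\c Q}\le \eta(H(\tilde{\c P})-H(\c Q))\le \eta\sum_k\log n_k$. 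Together with $\sprod{\c C}{\c Q}-\sprod{\c C}{\c P^*_\eta}\le 2\norm{\c C}_\infty\eps_{\textsf{stop}}$, the entropy comparison $\sprod{\c C}{\c P^*_\eta}-\sprod{\c C}{\c P^*}\le \eta\sum_k\log n_k$, the two $\eta\eps_{\textsf{log}}$ cost-perturbation terms, and your rounding bound $\sprod{\c C}{\hat{\c P}}-\sprod{\c C}{\tilde{\c P}}\le 2\norm{\c C}_\infty\eps_{\textsf{stop}}$, one obtains exactly $2\eta\eps_{\textsf{log}}+2\eta\sum_k\log n_k+4\norm{\c C}_\infty\eps_{\textsf{stop}}$. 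The price of this detour is the doubling of the entropy and rounding constants; the gain is that no monotonicity property of the Sinkhorn iterates is needed, and the argument works verbatim for any index-selection rule.
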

\begin{proof}
\rev{From $\c P^* \in B(r_1,\dots,r_m)$ and} $\c{P}_\eta^* = 
\argmin_{ {\c P} \in B(r_1,\dots,r_m)} \sprod{{{\c{C}}}}{\c{P}} - \eta H(\c{P})$ \rev{follows}
$\sprod{{{\c{C}}}}{\c{P}_\eta^*} - \eta H(\c{P}_\eta^*) \leq \sprod{{{\c{C}}}}{\c{P}^*} - \eta H(\c{P}^*)$. \rev{T}hus, 
\begin{align}
\sprod{{{{\c{C}}}}}{\c{P}_\eta^*} - \sprod{{{{\c{C}}}}}{\c{P}^*} \leq \eta H(\c{P}_\eta^*) - \eta H(\c{P}^*) \leq &\ \eta \sum_{k=1}^m \log(n_k), \label{eq:thm2boundetastarstar} 
\end{align}
where we use that $0 \leq H(\c{X}) \leq \sum_{k=1}^m \log(n_k)$ for any tensor $ { \mathcal{X} } \in \mathbb{R}_{\rev{+}}^{n_1\cdots n_m}$ \rev{with $||\c X||_1 = 1$}~\cite[ {Theorem~2.6.4}]{Cover06}.

\rev{Note that the marginals of $\tilde{\c P}$ are, in general, not equal to $r_1,\dots,r_m$. In order to compare $\tilde{\c P}$ and $\mathcal{P}_\eta^*$, we construct $\mathcal{Q} \in B(r_1(\tilde{\c P}), \dots, r_m(\tilde{\c P}))$ by applying Algorithm~\ref{alg:Rounding} to $\mathcal{P}_\eta^*$ with marginals $r_1(\tilde{\c P}), \dots, r_m(\tilde{\c P})$.
Since $\mathcal{P}_\eta^* \in B(r_1,\dots,r_m)$, Lemma~\ref{lem:roundingproperties} implies that $||\c Q - \mathcal{P}_\eta^*||_1 \leq 2 \eps_{\textsf{stop}}$. 
Hence,}
\begin{align}
    \label{eq:intermediateEQ}
\rev{
    \sprod{\c C}{{Q}} - \sprod{{{\c{C}}}}{\c{P}_\eta^*} \leq \norm{{{\c{C}}}}_\infty \norm{\c Q - \c{P}_\eta^*}_1 \leq 2 \norm{\c{C}}_\infty \eps_{\textsf{stop}}.
    }
\end{align}
The tensor $\tilde{\c P}$ is the unique scaling of $\tilde{\c K}$ with marginals $r_1(\tilde{\c P}), \dots, r_m(\tilde{\c P})$\rev{~\cite{Bapat82}}. It is thus the unique minimizer\rev{~\cite{Friedland20}} of 
$ 
\argmin_{ {\c P} \in B(r_1(\tilde{\c P}),\dots,r_m(\tilde{\c P}))} \sprod{{{\tilde{\c{C}}}}}{\c{P}} - \eta H(\c{P}),
$
where $\tilde{\c C} = -\eta \log( \tilde{\c K})$.
\rev{
Following the same argument as in~\eqref{eq:thm2boundetastarstar}, we obtain 
}
\begin{align}
\sprod{{{\tilde{\c{C}}}}}{\rev{\tilde{\c P}}} - \sprod{{{\tilde{\c{C}}}}}{\rev{\c Q}} \leq \eta H(\rev{\tilde{\c P}}) - \eta H(\rev{\c Q}) \leq &\ \eta \sum_{k=1}^m \log(n_k). \label{eq:THM3intermediate1} 
\end{align}
We further obtain
\begin{align}
 \sprod{{{\tilde{\c{C}}}}}{\rev{\c Q}} -  \sprod{{{{\c{C}}}}}{\rev{\c Q}} &\leq \norm{\tilde{\c C} - \c C}_\infty  \norm{\c Q}_1 = \norm{\tilde{\c C} - \c C}_\infty \leq \eta \eps_{\textsf{log}}, \label{eq:THM3intermediate2}\\
\sprod{{{{\c{C}}}}}{\tilde{\c{P}}}  - \sprod{{{\tilde{\c{C}}}}}{\tilde{\c{P}}} &\leq \norm{\tilde{\c C} - \c C}_\infty  \norm{\tilde{\c P}}_1 = \norm{\tilde{\c C} - \c C}_\infty \leq \eta \eps_{\textsf{log}}, \label{eq:THM3intermediate3}
\end{align}
where we use that $\norm{\tilde{\c P}}_1 = \norm{\rev{\c Q}}_1 = 1$.
Additionally, Lemma~\ref{lem:roundingproperties} yields 
\begin{equation}\label{eq:thm2boundhattilde}
\sprod{{{\c{C}}}}{\hat{\c{P}}} - \sprod{{{\c{C}}}}{\tilde{\c{P}}} \leq \norm{{{\c{C}}}}_\infty \norm{\hat{\c{P}}-\tilde{\c{P}}}_1 \leq 2 \norm{\c{C}}_\infty \eps_{\textsf{stop}}.
\end{equation}
By adding the inequalities~\eqref{eq:thm2boundetastarstar}--\eqref{eq:thm2boundhattilde} we obtain
\[
\sprod{\c{C}}{\hat{\c{P}}} - \sprod{{\c{C}}}{\c{P}^*} \leq  {2 \eta \eps_{\textsf{log}}} + 2 \eta \sum_{k=1}^m \log(n_k)   +  {\rev{4} \norm{\c{C}}_\infty} \eps_{\textsf{stop}}.
\]
\end{proof}

\begin{rmrk}
Note that for any  $\eps > 0$, we can find suitable $\eta, \eps_{\textsf{log}}, \eps_{\textsf{stop}}$ such that combining Algorithm~\ref{alg:ApproximateMMSinkhorn} and Algorithm~\ref{alg:Rounding} yields an $\eps$-accurate solution for the multi-marginal optimal transport problem~\eqref{eq:MOTproblem}.
 {
Analogously, we can find $\eta, \eps_{\textsf{log}}, \eps_{\textsf{stop}}$ such that~\eqref{eq:MTepsBound} is satisfied for any given $\eps_{V_{\c C}^\eta} > 0$.
In particular, we can first select $\eps_{\textsf{log}}$ and $\eps_{\textsf{stop}}$ based on $\norm{\c C}_{\infty}$. 
Afterwards, we can set $\eta$ sufficiently small.}
\end{rmrk}

\begin{rmrk}
 {
It might seem counter-intuitive that $\eps_{\textsf{log}}$ and $\eps_{\textsf{stop}}$ needs to be chosen inversely proportional to $\norm{\c C}_{\infty}$ in the previous remark.
This is caused by the chosen objective function in~\eqref{eq:regularizeMOTproblem}.
Let $\alpha = \norm{C}_\infty^{-1}$. 
Note that optimal solution of the regularized~\eqref{eq:regularizeMOTproblem} and the set of optimal solutions of the original optimal transport problem~\eqref{eq:MOTproblem} do not change when we replace both $\c C$ by $\alpha \c C$ and $\eta$ by $\alpha \eta$. 
This normalization changes the optimal value of the objective functions in~\eqref{eq:regularizeMOTproblem} and~\eqref{eq:MOTproblem} by $\alpha$, but it does not change the Gibbs kernel $\c K$. 
Thus, we can transform the error bounds in Theorem~\ref{thm:LowRankError} and~\ref{thm:EpsAccurateApprox} into bounds for this normalized problem, by multiplying $\eps_{V_{\c C}^\eta}$ and $\eps$ by $\alpha$.
To obtain a certain $\alpha \eps_{V_{\c C}^\eta}$ respectively $\alpha \eps$, we can select $\eps_{\textsf{log}}$ and $\eps_{\textsf{stop}}$ independently from $\norm{\c C}_\infty$ before determining an suitable $\eta$.
}
\end{rmrk}

\section{Tensor networks and graphical models}\label{sec:StructuredTensors} 

In applications, the cost tensor $\c C \in \R^{n_1 \times \dots \times n_m}_+$ usually carries additional structure. A broad class of structures  {leads to transport plans} defined via graphical models~\cite{Haasler21c}. In this case, the entries of $\c C$ take the form
\begin{equation}
    \label{eq:GraphicalModelCost}
    \c C_{I} = \sum_{\alpha \in F} \c C^{\alpha}_{I_{\alpha}} \quad \text{ for every } I = (i_1,\dots,i_m), \quad 1\leq i_k \leq n_k,\ 1\leq k \leq m,
\end{equation}
where the summation  {index tuples} $\alpha = (\alpha_1,\dots,\alpha_M)$ are  {contained in a fixed subset $F$ of \[\bigcup_{M=1}^m \{(\alpha_1,\dots,\alpha_M) \in \mathbb{N}^M| 1\leq \alpha_1 < \dots < \alpha_M \leq m\},\]} $I_{\alpha} := (i_{\alpha_1},\dots,i_{\alpha_M})$ and $\c C^{\alpha} \in \R_+^{n_{\alpha_1} \times \dots  \times n_{\alpha_M}}$.
The corresponding Gibbs kernel is given by
\begin{equation}
    \label{eq:GraphicalModelKernel}
    \c K_{I} = \prod_{\alpha \in F} \c K^{\alpha}_{I_{\alpha}} \quad \text{ for every } I = (i_1,\dots,i_m), \quad 1\leq i_k \leq n_k,\ 1\leq k \leq m,
\end{equation}
where $\c K^{\alpha} = \exp(-\c C^{\alpha}/\eta)$.
This Gibbs kernel can be represented in terms of a tensor network~\cite{Orus14}.
In general, a tensor network represents a high-order tensor \rev{that} is constructed by contracting several low-order tensors. 
By contraction we refer to the sum over a joint index in two low-order tensors.
A graph is used to describe precisely how the low-order tensors should to be contracted. 
Its vertices correspond to the low-order tensors.
Each edge corresponds to a contraction of the two low-order tensors corresponding to the vertices connected by the edge.

In the following, we describe how to construct the particular network for $\c K$.
First, we add each tensor $\c K^{\alpha}$ as a vertex.
For every $1\leq k \leq m$, we add an additional tensor $\c D^{(k)}$ as vertex.
For every $1\leq k \leq m$ and $\alpha \in F$, we add edges from $\c K^{\alpha}$ to $\c D^{(k)}$ if $k$ is contained in $\alpha$.
We then add one open edge to each $\c D^{(k)}$ \rev{that} corresponds to the index in the $k$th mode of $\c K$.
The order $d_k$ of the tensors $\c D^{(k)} \in \R^{n_k \times \dots \times n_k}$ is equal to the number of connected edges. 
Their entries are given by $\c D^{(k)}_{i_1,\dots,i_{d_k}} = \delta_{i_1i_2} {\delta_{i_2,i_3}}\cdots \delta_{i_{{d_k}-1}i_{d_k}}$, where $\delta$ denotes the Kronecker delta.
Each edge in the tensor network corresponds to the sum over the corresponding index in the connected vertices.
See Figure~\ref{fig:Duality} for an example. 

\begin{figure}[!ht]

\centering
\subfloat[Direct construction]{{\includegraphics[scale = 1.1]{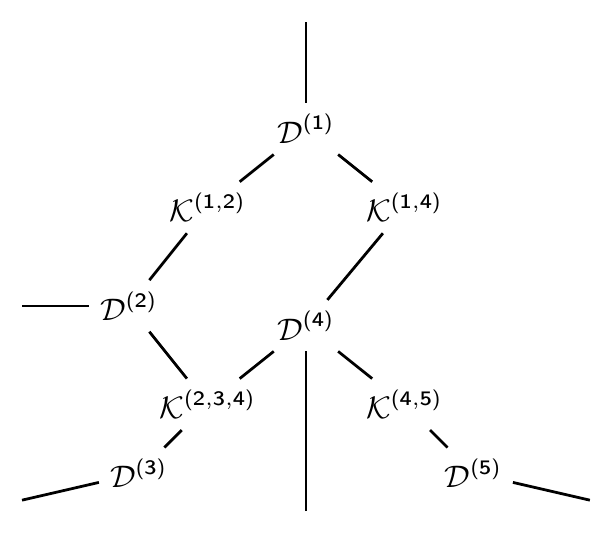}}}%
\qquad
\subfloat[After simplification]{{\includegraphics[scale = 1.1]{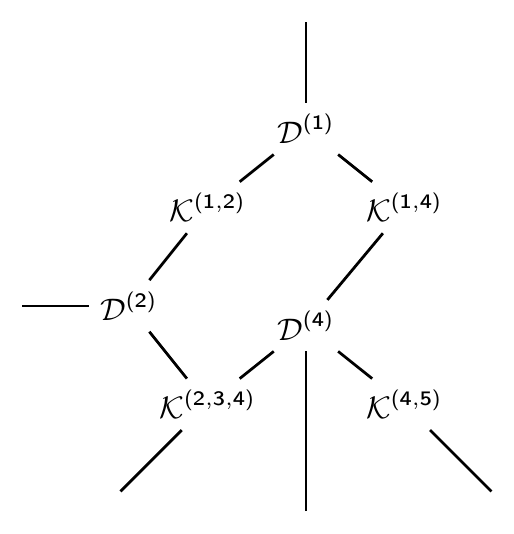}}}%
\captionsetup{singlelinecheck=off}
    \caption[.]{
 {Tensor network representation of $\mathcal{K}$ for a cost tensor of the form~\eqref{eq:GraphicalModelCost} constructed from $\c C^{( {1,2})}, \c C^{( {1},4)} \c C^{( {2,}3,4)}, \c C^{(4,5)}$. In~(b), we slightly simplified the network by contracting the identity matrices $\c D^{(3)}, \mathcal{D}^{(5)}$ with their connected open edges.
Summing over all internal edges yields the elementwise representation 
\begin{displaymath}
  \mathcal K_{i_1,i_2,i_3,i_4,i_5} = \sum_{j_1 = 1}^{n_1} \sum_{j_2 = 1}^{n_1} \sum_{j_3 = 1}^{n_2} \sum_{j_4 = 1}^{n_2} \sum_{j_5 = 1}^{n_4} \sum_{j_6 = 1}^{n_4} \sum_{j_7 = 1}^{n_4} 
 \mathcal D^{(1)}_{i_1,j_1,j_2} 
 \mathcal D^{(2)}_{i_2,j_3,j_4} 
 \mathcal D^{(4)}_{i_4,j_5,j_6,j_7} 
 \mathcal K^{(1,2)}_{j_1,j_3}
 \mathcal K^{(1,4)}_{j_2,j_5}
 \mathcal K^{(2,3,4)}_{j_4,i_3,j_6}
 \mathcal K^{(4,5)}_{j_7,i_5}.
\end{displaymath}
}
}
\label{fig:Duality}
\end{figure}

\begin{rmrk}
 {
We want to emphasize that this tensor network is closely related to the dual tensor network of the graphical model representing the transport plan~\cite{Robeva19}.
In the context of graphical models, the cost tensors $\c C$ is given in the form of~\eqref{eq:GraphicalModelCost}. The resulting transport plan $\c P^{(t)}_\eta$ defined in~\eqref{eq:tensorScalingForm} can be represented as tensor network by attaching the matrices $\diag(\exp(\beta_k))$ to the corresponding open modes of the tensor network representation of $\c K$.
At the same time, $\c P^{(t)}_\eta$ represents a discrete probability distribution\rev{,} which can be represented by a graphical model~\cite{Haasler21c}.
This graphical model and the tensor network of $\c P^{(t)}_\eta$ are duals of each other~\cite{Robeva19}.}
\end{rmrk}

 {
We can compute $\c K$ from the tensor network by contracting each of the internal edges sequentially. 
The contraction of one internal edge corresponds to the merging the connected vertices by evaluating of the sum over the index corresponding to the edge~\cite{Orus14}.
When the tensor network contains circles, multi-edges will occur, which can be contracted by summing over all the corresponding indices simultaneously.
The order of contracting the internal edges determines the degree of the occurring intermediate vertices and the computational complexity.
In our constructed tensor network, we can exploit the special structure of $\c D^{(k)}$ to contract all connected edges simultaneously.
The book~\cite{Ran20} discusses several heuristics to optimize the order of contractions.
Note that the optimal contraction sequence might still incur a large computational cost.
}

In Algorithm~\ref{alg:MMSinkhorn}, we need to evaluate the marginals of ${\c P_\eta^{(t)}}$ in each iteration. 
 {Let $\gamma_k^{(t)} = \exp(\beta_k^{(t)})$ for $1 \leq k \leq m$.}
Given a tensor network representation of $\c K$, the mode-$k$ marginals can be written as a contraction of the network after connecting the matrix $\diag( {\gamma^{(t)}_k})$ to the open edge corresponding to mode $k$, and the vectors $ {\gamma^{(t)}_{\tilde{k}}}$ for $\tilde{k}\neq k$ to their respective open edges. 
By contracting all inner edges of the resulting network, we obtain $r_k(\c P_\eta^{(t)})$. 
We refer to Figure~\ref{fig:StructuredMarginal} for examples. 
 {The depicted networks will again be used in the numerical experiments in Section~\ref{sec:NumerExperi}.}

\begin{figure}[!ht]
\centering
\subfloat[]{{\includegraphics[scale = 1.1]{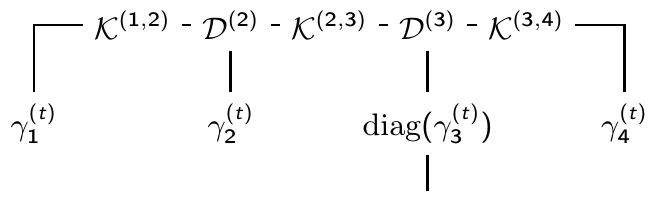}}}%
\qquad
\subfloat[]{{\includegraphics[scale = 1.1]{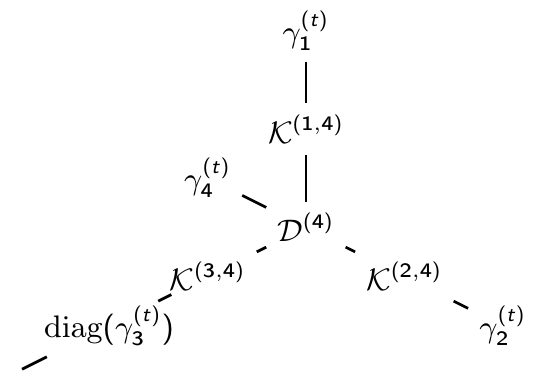}}}%
\caption{
Examples for the tensor network diagram representation of $r_3(\c{P}_\eta^{(t)})$ based on different  {tensor network structures for the Gibbs kernel}~\eqref{eq:GraphicalModelKernel}. 
}
\label{fig:StructuredMarginal}
\end{figure}

We give a brief example on how to efficiently contract the network depicted in Figure~\ref{fig:StructuredMarginal}(a). 
For the complexity analysis we assume $n_1=n_2=n_3=n_4=n$.
 {We first compute the vectors $v_j^{(1)} = \sum_{i=1}^n (\gamma_1^{(t)})_i \c K^{(1,2)}_{i,j}$ and $v_j^{(4)} = \sum_{i=1}^n \c K^{(3,4)}_{j,i} (\gamma_4^{(t)})_i $ for $1 \leq j \leq n$ in $n(2n-1)$ operations each,  {where operations refers to the required number of additions and multiplications.}
This corresponds to contracting the edge between $\gamma_1^{(t)}$ and $\c K^{(1,2)}$ as well as the edge between $\c K^{(3,4)}$ and $\gamma_4^{(t)}$.
In the next step, we contract the edges from $\c D^{(2)}$ to $v^{(1)},\gamma_2^{(t)}$ and $\c K^{(2,3)}$ simultaneously. 
This corresponds to computing the vector $v^{(2)}_j = \sum_{i=1}^n (v^{(1)} \circ \gamma_2^{(t)})_i \c K^{(2,3)}_{i,j}$
for $1 \leq j \leq n$ in $n+n(2n-1) = 2n^2$ operations, where we first compute the elementwise product before evaluating the matrix vector product.
Finally, we contract the remaining edges around $\c D^{(3)}$ to compute 
$r_3(\c{P}_\eta^{(t)}) = v^{(2)} \circ \gamma_3^{(t)} \circ v^{(4)}$ in $2n$ operations. 
In total, we need $6n^2$ operations to compute the marginal $r_3(\c{P}_\eta^{(t)})$.
\rev{Note that this contraction strategy corresponds to the following reordering of the sums in the definition of the marginal}
	\begin{align*}
		\rev{	r_3(P_\eta^{(t)})_{i_3}}
		&= \rev{\sum_{i_1=1}^n \sum_{i_2=1}^n \sum_{i_4=1}^n \mathcal K_{i_1, i_2}^{(1, 2)}\mathcal K_{i_2, i_3}^{(2, 3)}\mathcal K_{i_3, i_4}^{(3, 4)} (\gamma^{(t)}_1)_{i_1}(\gamma^{(t)}_2)_{i_2}(\gamma^{(t)}_3)_{i_3}(\gamma^{(t)}_4)_{i_4} }\\
		&= \rev{(\gamma^{(t)}_3)_{i_3}\cdot \Big( \sum_{i_2=1}^n\mathcal K_{i_2, i_3}^{(2, 3)}\cdot \Big( (\gamma^{(t)}_2)_{i_2} \cdot \Big( \sum_{i_1=1}^n\mathcal K_{i_1, i_2}^{(1, 2)} (\gamma^{(t)}_1)_{i_1} \Big) \Big)  \Big) \cdot\Big(\sum_{i_4=1}^n \mathcal K_{i_3, i_4}^{(3, 4)} (\gamma^{(t)}_4)_{i_4}\Big).		}
	\end{align*}
We can reuse the intermediate terms $v^{(1)},v^{(2)},v^{(4)}$ in the computation of the other marginals. 
This allows us to compute all four marginals in $12n^2 + 4n$ operations,
}
whereas {computing the marginals} based on the full tensor requires $\mathcal{O}(n^4)$ operations.

\begin{rmrk}
 {Instead of the tensor network based on an ordinary graph with special tensors $\c D^{(k)}$, we could consider a tensor network based on a hypergraph as in~\cite{Robeva19}. 
The structure of $\c D^{(k)}$ can be modeled by a single hyperedge containing all vertices connected to $\c D^{(k)}$.
The contraction of these hypergraph based networks is studied in~\cite{Huang21}.}
\end{rmrk}

\begin{rmrk}
 {Note that the structure of $\c K$ can also be exploited to compute marginals when applying Algorithm~\ref{alg:Rounding} to $\c P_\eta^{(t)}$. Storing the rank-$1$ update  in line~\ref{algln:RankOneUpdate}  separately in terms of its factors offers the potential to avoid storing the transport plan explicitly as a full tensor.}
\end{rmrk}

\section{Low-rank approximations in tensor networks} \label{sec:StructuredKernelApproximation}

Assuming that the Gibbs kernel is represented  {as in Equation}~\eqref{eq:GraphicalModelKernel}, we obtain a tensor network containing the coefficient tensors $\c K^{\alpha}$. 
The following lemma bounds the  {impact on the Gibbs kernel} when replacing each $\c K^{\alpha}$ by an approximation $\tilde{\c K}^{\alpha}$.

\begin{lemma}\label{lem:LowRankStructuredError}
 {Let $\c K$ be defined based on tensors $\c K^{\alpha} \in \R_{>0}^{n_{\alpha_1} \times \dots \times n_{\alpha_M}}$ for $\alpha \in F$ as in~\eqref{eq:GraphicalModelKernel}. 
Let $\tilde{\c K}^{\alpha} \in  \R_{>0}^{n_{\alpha_1}\times \dots \times n_{\alpha_M}}$ for $\alpha \in F$. We define $\tilde{\c K} \in \mathbb{R}^{n_1 \times \dots \times n_m}_{>0}$ elementwise as
\begin{equation}
\tilde{\c{K}}_{I} = \prod_{\alpha \in F} \tilde{\c K}^{\alpha}_{I_\alpha} \quad \text{ for every } I = (i_1,\dots,i_m), \quad 1\leq i_k \leq n_k,\ 1\leq k \leq m. \label{eq:lowRankStructuredK}
\end{equation}
}

Then
\[
  \norm{\log(\c{K}) - \log(\tilde{\c{K}})}_\infty \leq   \sum_{\alpha \in F} \norm{\log( {\c K}^{\alpha})-\log(\tilde{ { \c K}}^{\alpha})}_\infty.
\]
\end{lemma}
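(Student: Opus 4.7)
The plan is to reduce everything to the elementwise definition and use that the logarithm turns the product structure in~\eqref{eq:GraphicalModelKernel} and~\eqref{eq:lowRankStructuredK} into a sum. Since the $\log$ in the lemma is applied elementwise and all entries of $\c K^{\alpha}$ and $\tilde{\c K}^{\alpha}$ are strictly positive, the logarithms are well-defined, and for every index tuple $I = (i_1,\dots,i_m)$ we get
\[
\log(\c{K})_I = \sum_{\alpha \in F} \log(\c K^{\alpha})_{I_\alpha}, \qquad \log(\tilde{\c K})_I = \sum_{\alpha \in F} \log(\tilde{\c K}^{\alpha})_{I_\alpha}.
\]
Subtracting these two identities gives the key exact equality
\[
\log(\c K)_I - \log(\tilde{\c K})_I = \sum_{\alpha \in F}\bigl(\log(\c K^{\alpha})_{I_\alpha} - \log(\tilde{\c K}^{\alpha})_{I_\alpha}\bigr).
\]

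Next, I would apply the triangle inequality entrywise, then bound each summand by the uniform norm of the corresponding tensor difference:
\[
\bigl|\log(\c K)_I - \log(\tilde{\c K})_I\bigr| \leq \sum_{\alpha \in F} \bigl|\log(\c K^{\alpha})_{I_\alpha} - \log(\tilde{\c K}^{\alpha})_{I_\alpha}\bigr| \leq \sum_{\alpha \in F} \norm{\log(\c K^{\alpha})-\log(\tilde{\c K}^{\alpha})}_\infty.
\]
The right-hand side no longer depends on $I$, so taking the maximum over all admissible index tuples $I$ on the left yields the claimed bound.

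There is no real obstacle: the argument is purely algebraic and the only observation needed is that applying $\log$ converts products of positive real numbers into sums, at which point the triangle inequality and the definition of the uniform norm finish the job. The proof will therefore consist of essentially the three displays above, with a short sentence confirming positivity of all involved factors so that the elementwise logarithms are defined.
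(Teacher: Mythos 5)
Your proposal is correct and follows exactly the same argument as the paper: take elementwise logarithms to turn the products in~\eqref{eq:GraphicalModelKernel} and~\eqref{eq:lowRankStructuredK} into sums, apply the triangle inequality entrywise, bound each term by $\norm{\log(\c K^{\alpha})-\log(\tilde{\c K}^{\alpha})}_\infty$, and take the maximum over all index tuples. No gaps.
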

\begin{proof}
\begin{align*}
    \norm{\log(\c{K}) - \log(\tilde{\c{K}})}_\infty & = \max_{i_1,\dots,i_m} |\sum_{\alpha \in F} \log(\c K^{\alpha}_{I_\alpha}) - \sum_{\alpha \in F} \log(\tilde{\c K}^{\alpha}_{I_\alpha})| \\
    & \leq \max_{i_1,\dots,i_m} \sum_{ \alpha \in F} |\log(\c K^{\alpha}_{I_\alpha})-\log(\tilde{\c K}^{\alpha}_{I_\alpha})| \\
    & \leq \sum_{\alpha \in F} \norm{\log(\c K^{\alpha})-\log(\tilde{\c K}^{\alpha})}_\infty  {.}
\end{align*}
\end{proof}

Combining Lemma~\ref{lem:LowRankStructuredError} and Theorem~\ref{thm:LowRankError} implies that Algorithm~\ref{alg:ApproximateMMSinkhorn} with $\tilde{\c{K}}$ defined as in~\eqref{eq:lowRankStructuredK} yields an accurate approximation of the optimal transport plan when $\norm{\log(\c K^{\alpha})-\log(\tilde{\c K}^{\alpha})}_\infty$ is sufficiently small for all $\alpha \in F$. This allows one to replace each $\c K^{\alpha}$ by a low-rank approximation $\tilde{\c K}^{\alpha}$, which in turn accelerates the computation of tensor network contractions. 
For the example at the end of Section~\ref{sec:StructuredTensors}, the number of operations reduces from $\mathcal{O}(n^2)$ to $\mathcal{O}(nr)$ when every $\c K^{\alpha}$ is approximated by a rank-$r$ matrix of the form $\c K^{\alpha} = U^{\alpha}(V^{\alpha})^T$ with $U^{\alpha},V^{\alpha} \in \R^{n \times r}$ as depicted in Figure~\ref{fig:SVDsTT}(a).

\section{Numerical  {e}xperiments} \label{sec:NumerExperi}

All numerical experiments in this section were performed in MATLAB R2018b on a Lenovo Thinkpad T480s with Intel Core i7-8650U CPU and 15.4 GiB RAM.  {In Algorithms~\ref{alg:MMSinkhorn} and~\ref{alg:ApproximateMMSinkhorn} we select the next index to be updated using index selection~\eqref{eq:IndexSelectionFriedland} and we stop using stopping criterion~\eqref{eq:friedlandStoppingCrit} with $\eps_{\textsf{stop}} = 10^{-4}$.}
The code to reproduce these results is available from \url{https://github.com/cstroessner/Optimal-Transport.git}. 

\subsection{Proof of concept}\label{sec:ProofOfConcept}
In the following, we study the impact of approximating the Gibbs kernel on the transport cost.
 {We define a multi-marginal optimal transport problem, whose cost tensor is of the form studied in~\cite[Section 5.2]{Elvander20}.}  
 {Let $n=420$. For $1\leq k \leq4$, we generate random point sets $X^{(k)} =\{x^{(k)}_1,\dots, x^{(k)}_n\}$ by sampling the points $x^{(k)}_i \in \R^2$ independently randomly from the uniform distribution on $[0,1]^2$ for $1\leq i \leq n$.} 
We define $n \times n$ matrices $C^{(1,2)},C^{(2,3)},C^{(3,4)}$ with entries 
\[\c C^{(1,2)}_{i,j} = \norm{x^{(1)}_i-x^{(2)}_j}_2^2, \ \c C^{(2,3)}_{j,k} = \norm{x^{(2)}_j-x^{(3)}_k}_2^2, \ \c C^{(3,4)}_{k,l} = \norm{x^{(3)}_k-x^{(4)}_l}_2^2 \quad \text{for } 1\leq i,j,k,l \leq n.\]
We construct the cost tensor 
\begin{equation*}
    \mathcal{C}_{i,j,k,l} = \c C^{(1,2)}_{i,j}+\c C^{(2,3)}_{j,k}+\c C^{(3,4)}_{k,l} \quad \text{for } 1\leq i,j,k,l \leq n.
\end{equation*}
Let $\c K^{\alpha} = \exp(-\c C^{\alpha})$ for $\alpha \in \{(1,2),(2,3),(3,4)\}$. 
The Gibbs kernel $\mathcal{K} = \exp(-\mathcal{C})$  {is represented by the tensor network} shown in Figure~\ref{fig:StructuredMarginal}{(a)}. 

Let $r \leq n$. 
We compare two different approximations of $\mathcal{K}$.
For the first one, we  replace the matrices $\c K^{\alpha}$ by their rank-$r$ best approximations $\tilde{\c K}^{\alpha}$  using truncated singular value decompositions (SVDs) and define 
\[(\mathcal{K}_{\textsf{SVDs}})_{i,j,k,l} = \tilde{\c K}^{(1,2)}_{i,j}\cdot \tilde{\c K}^{(2,3)}_{j,k}\cdot \tilde{\c K}^{(3,4)}_{k,l}\quad \text{for } 1\leq i,j,k,l \leq n.\]
For the second approximation $\mathcal{K}_{\textsf{TT}}$, we compute a tensor train (TT) approximation~\cite{Oseledets11} of $\mathcal{K}$ with ranks $(r,r,r,r)$ using the TT-DMRG-cross algorithm~\cite{Savostyanov11} ignoring the underlying  {graph structure}. 
The tensor network representation of $\mathcal{K}_{\textsf{SVDs}}$ and $\mathcal{K}_{\textsf{TT}}$ is depicted in Figure~\ref{fig:SVDsTT}.
All four marginals can be computed in $\c O(nr)$ operations for $\mathcal{K}_{\textsf{SVDs}}$ and in $\c O(nr^2)$ operations for $\mathcal{K}_{\textsf{TT}}$ by contracting the tensor networks. 
In contrast, exploiting the  {graph structure in $\c K$} without low-rank approximations requires $\c O(n^2)$ operations.

\begin{figure}[!ht]
\centering
\subfloat[Based on $\mathcal{K}_{\textsf{SVDs}}$ ]{{\includegraphics[scale = 1.1]{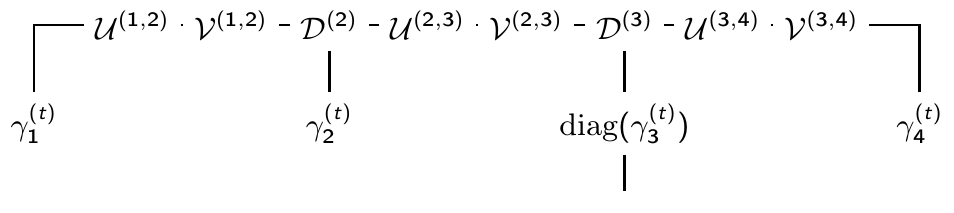}}}%
\qquad
\subfloat[Based on $\mathcal{K}_{\textsf{TT}}$]{{\includegraphics[scale = 1.1]{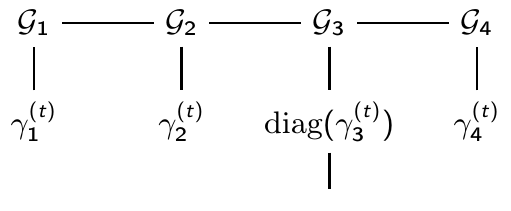}}}%
    \caption{Tensor networks for the computation of  $r_3(\c{P}_\eta^{(t)})$ for the example in Section~\ref{sec:ProofOfConcept}. 
    We express the truncated SVDs in $\mathcal{K}_{\textsf{SVDs}}$ as $\tilde{\c K}^{\alpha} = U^{\alpha}(V^{\alpha})^T$ with $U^{\alpha},V^{\alpha} \in R^{n \times r}$. 
    The TT cores in $\mathcal{K}_{\textsf{TT}}$ are denoted by $\c G_1 \in \R^{n \times r}$, $\c G_2, \c G_3 \in \R^{r \times n \times r}$ and $\c G_4 \in \R^{r \times n}$.}
    \label{fig:SVDsTT}
\end{figure}

Based on the tensors $\mathcal{K},\mathcal{K}_{\textsf{SVDs}},\mathcal{K}_{\textsf{TT}}$, we compute transport plans $\mathcal{P},\mathcal{P}_{\textsf{SVDs}},\mathcal{P}_{\textsf{TT}}$ by first applying Algorithm~\ref{alg:ApproximateMMSinkhorn} with marginals $r_k = \frac{1}{n_k}\cdot\mathbf{1}_{n_k}$ and then rounding the resulting tensor using Algorithm~\ref{alg:Rounding}. 
In Figure~\ref{fig:ProofOfConcept}, we compare the different transport plans. 
Note that we can efficiently evaluate the transport cost $\langle \c C,\c P \rangle$ using tensor network contractions of $\c C^\alpha$ and $\c P$ without evaluating the full tensors. 
We observe that the difference in transport cost of $\mathcal{P}_{\textsf{SVDs}},\mathcal{P}_{\textsf{TT}}$ and $\c P$ is much smaller than the norm of the difference of the logarithms of $\mathcal{K}_{\textsf{SVDs}},\mathcal{K}_{\textsf{TT}}$ and $\c K$.
The approximation $\c P_{\textsf{SVDs}}$  {that} exploits the  {graph structure} leads to slightly better approximations compared to $\mathcal{P}_{\textsf{TT}}$. 
We want to emphasize that computing $\c P_{\textsf{SVDs}}$ with $r=25$ is faster than using the  {graph structure} of $\c K$ directly and only leads to a difference in transport cost in the order of machine precision. 
Computing $\mathcal{P}_{\textsf{TT}}$ is faster than computing $\mathcal{P}$ for very small ranks.
 {The different scaling in the number of operations required to compute marginals leads to larger computation times for $\mathcal{P}_{\textsf{TT}}$ compared to $\mathcal{P}_{\textsf{SVDs}}$ for increasing values of $r$.}
 \rev{ We want emphasize that storing a tensor in $\R^{n \times n \times n \times n}$ explicitly would require more than $240$GB of memory. 
 Thus, it would not be feasible to solve this problem without exploiting either the underlying structure of $\mathcal{C}$ or the structure of the TT approximation.}

\begin{figure}[!ht]
\centering
\subfloat[Error analysis]{{\includegraphics[width=0.4\textwidth]{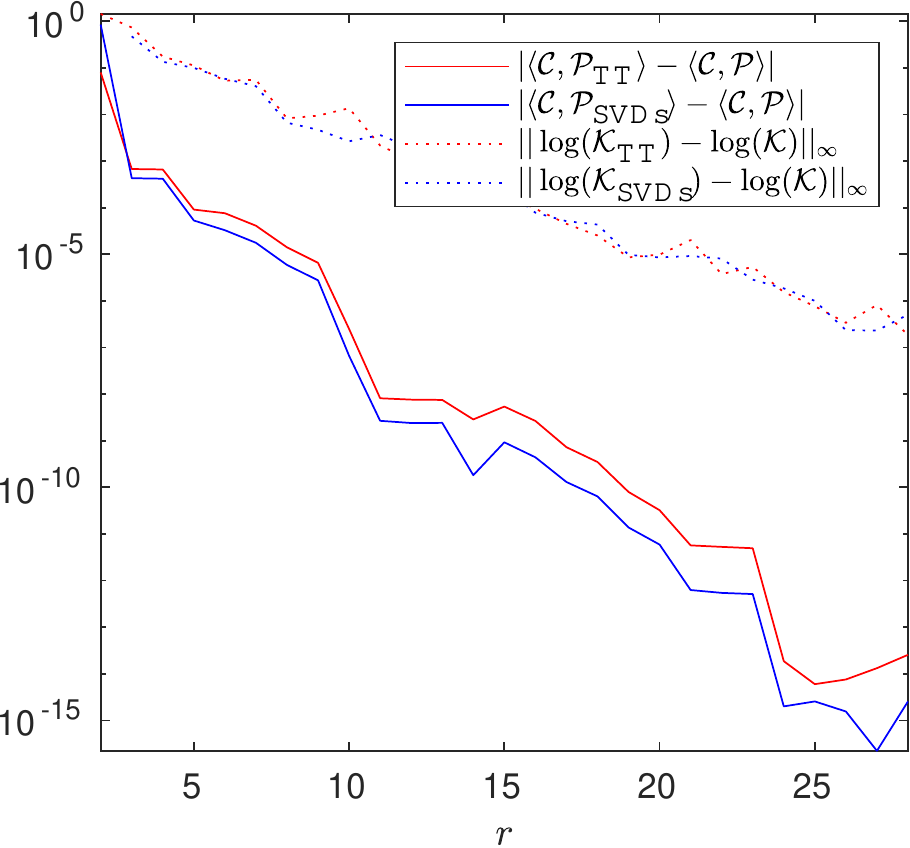}}}%
\qquad
\subfloat[Computation time]{{\includegraphics[width=0.4\textwidth]{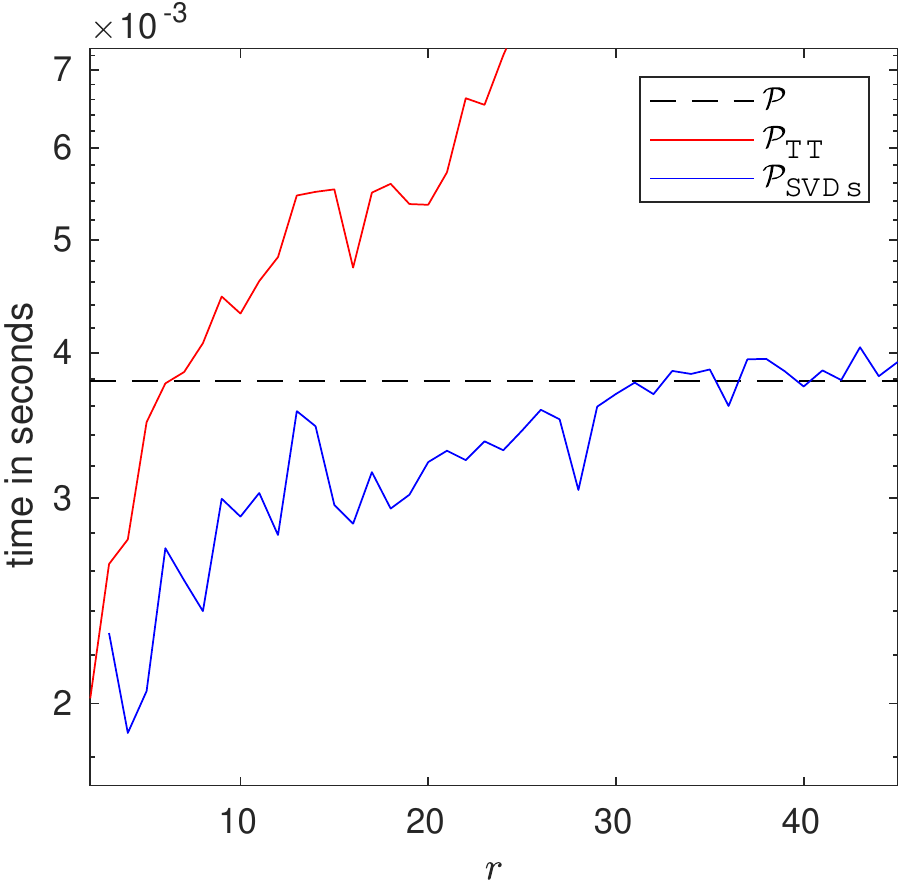}}}%
\caption{Difference in transport cost for the example in Section~\ref{sec:ProofOfConcept} for various ranks $r$. 
Left: We depict the difference in transport cost for $\mathcal{P}_{\textsf{SVDs}},\mathcal{P}_{\textsf{TT}}$ and $\mathcal{P}$ and an estimation of the norm of the difference of the logarithms of $\mathcal{K}_{\textsf{SVDs}},\mathcal{K}_{\textsf{TT}}$ and $\mathcal{K}$ based on $1\,000$ sample points. 
Right: Measured computation times for applying Algorithm~\ref{alg:ApproximateMMSinkhorn} and~\ref{alg:Rounding} to compute the transport plans exploiting the structures of $\mathcal{K},\mathcal{K}_{\textsf{SVDs}},\mathcal{K}_{\textsf{TT}}$.
 {Note that this time does not include the computation of $\mathcal{K}^{\alpha},\mathcal{K}_{\textsf{SVDs}},\mathcal{K}_{\textsf{TT}}$.}
}
\label{fig:ProofOfConcept}
\end{figure}

\begin{rmrk} {
Figure~\ref{fig:ProofOfConcept} shows that larger ranks lead to smaller approximation errors, but at the same time larger ranks increase the computation time. 
This needs to be balanced in practice. 
In particular, the rank needs to be sufficiently large such that the approximation $\tilde{\c K}$ of the Gibbs kernel is strictly positive, which implies that $\norm{\log(\c K) - \log(\tilde{\c K})}_{\infty}$ is bounded.
This can be achieved by choosing an approximation such that $\norm{\c K - \tilde{\c K}}_{\infty}$  is smaller than the smallest entry of $\c K$.
}\end{rmrk}

\begin{rmrk}
\rev{
The difference of the entropic cost for the tensors $\mathcal{P}_{\textsf{SVDs}},\mathcal{P}_{\textsf{TT}}$ and the optimal transport plan $\mathcal{P}_\eta^*$ is bounded by Theorem~\ref{thm:LowRankError}. 
We can assume that $\mathcal{P}$ is a good approximation of $\mathcal{P}_\eta^*$.
This would allow us to study the sharpness of the bound numerically.
However, the evaluation of the entropic cost requires the explicit computation of the full tensors, which is not feasible for $n=420$.
Instead, we repeat the experiment in Section~\ref{sec:ProofOfConcept} with a smaller $n$. 
The results are depicted in Figure~\ref{fig:sharpness}.
We find that the theoretical bound is much larger than the error observed in practice.
}
\end{rmrk}

\begin{figure}[!ht]
    \centering
    \includegraphics[width=0.4\textwidth]{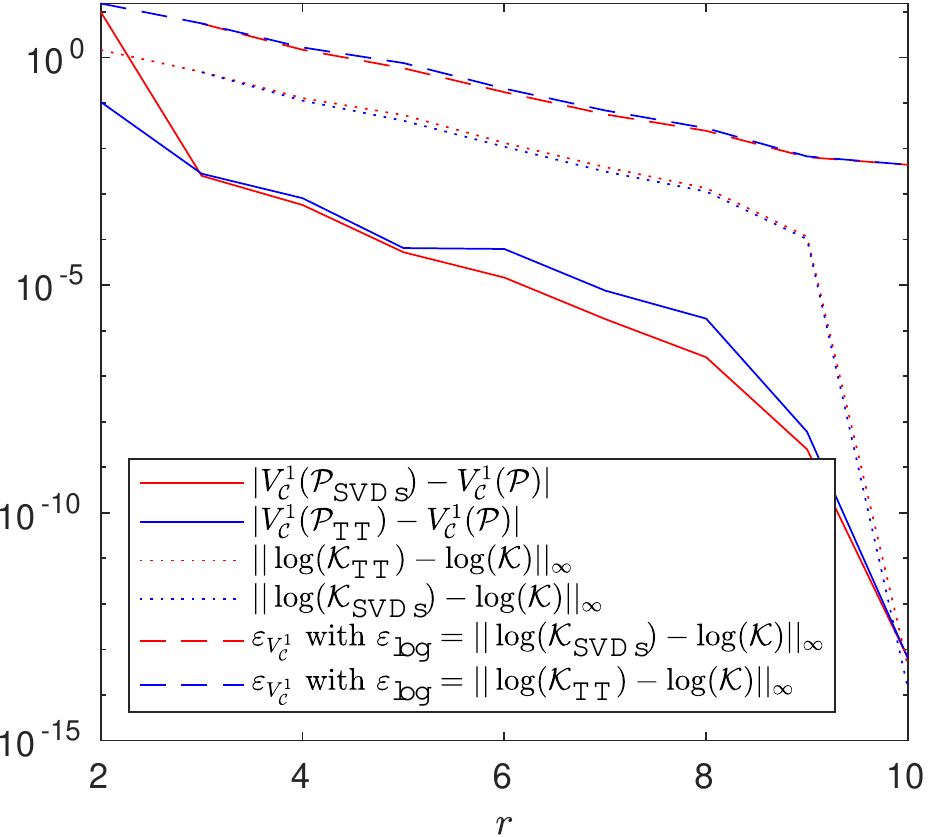}
    \caption{\rev{We repeat the experiment described in Section~\ref{sec:ProofOfConcept} with $n=10$ for various ranks $r$.
    We depict the difference in the entropic transport cost~\eqref{eq:regularizeMOTproblem}  of $\mathcal{P}_{\textsf{SVDs}},\mathcal{P}_{\textsf{TT}}$ and $\mathcal{P}$.
    Further, we depict the norm of the difference of the elementwise logarithm of $\mathcal{K}_{\textsf{SVDs}},\mathcal{K}_{\textsf{TT}}$ and $\mathcal{K}$.
    Based on these values we compute the value of $\eps_{V_{\mathcal C}^1}$  as defined in~\eqref{eq:MTepsBound}.}}
    \label{fig:sharpness}
\end{figure}

\subsection{Application: Color transfer  {from color \rev{b}arycenters}} \label{sec:Color}

In the following, we apply our algorithms for color transfer as in~\cite{Haasler21c}.   
We consider $k=4$ images containing $n = 100^2$ pixels each. 
Let $0 \leq \lambda_1, \lambda_2, \lambda_3$ such that $\lambda_1 + \lambda_2 + \lambda_3 = 1$. 
 {In a first step, we compute an approximation of the Wasserstein \rev{b}arycenter~\cite{Agueh11} with weights $\lambda = (\lambda_1,\lambda_2,\lambda_3)$ of the color of the first three images by solving the multi-marginal optimal transport problem \rev{in~\cite[Section~4.2]{Benamou15}}.
We then transfer the color from the approximation of the \rev{b}arycenter onto the fourth image by solving a two-marginal optimal transport problem~\cite{Rabin14}.
}

Let $x_i^{(k)} \in [0,1]^3$ denote the color of pixel $i$ in image $k$, where we treat the RGB values as element in $\R^3$ and rescale to $[0,1]^3$. 
 {Let $x^{(B)}_i = \lambda_1 x^{(1)}_i + \lambda_2 x^{(2)}_i + \lambda_3 x^{(3)}_i$ for $1\leq i \leq n$ \rev{as in~\cite{Benamou15}}.}
 We use these points to define a reference template~\cite{Wang13} for computing the approximation of the barycenter.
Let $\c C^{( {k,4})}_{i,j} = \norm{x^{(k)}_i-x^{( {B})}_j}_2^2$ for $1\leq i,j \leq n$ and $\c K^{( {k,4})} = \exp(-  \c C^{( {k,4})}/\eta)$ for $1 \leq  {k} \leq  {3}$.
We define the cost tensor 
\[
\mathcal{C}_{i,j,k,l} =  \lambda_1 \c C^{(1,4)}_{i,l}+ \lambda_2 \c C^{(2,4)}_{j,l}+ \lambda_3 \c C^{(3,4)}_{k,l} \quad \text{for } 1\leq i,j,k,l \leq n,
\]
 {and the Gibbs kernel tensor $\c K = \exp(-\rev{\c C}/\eta)$ for a given regularization parameter $\eta \geq 0$. Following the ideas in~\cite{Elvander20,Haasler21c}, we compute $r_B = r_4(\c{P}_\eta^*)$, where
\begin{equation} \label{eq:BarycenterScalingForm}
    \c{P}_\eta^* = \c K \times_1 \diag(\exp(\beta_1))  \times_2 \diag(\exp(\beta_2)) \times_3 \diag(\exp(\beta_3)) \times_4 \diag(\mathbf{1}_n),
\end{equation}
with scaling parameters $\beta_1,\beta_2,\beta_3 \in \R^{n}$ chosen such that $r_k(\c{P}_\eta^*) = \mathbf{1}$ for $1\leq k \leq 3$.
To compute an approximation $\c{P}_\eta$ of $\c{P}_\eta^*$ we run Algorithm~\ref{alg:MMSinkhorn} with cost tensor $\c C$ and marginals $r_1 = r_2 = r_3 = \mathbf{1}_n$ and index selection~\eqref{eq:IndexSelectionFriedland}, which we modify to use $\uargmax{k \in \{1,2,3\}}$ instead of $\uargmax{k \in \{1,2,3,4\}}$.
This modification ensures that we only update the first three scaling parameters, which leads to an approximation of the form~\eqref{eq:BarycenterScalingForm}~\cite[Theorem~3.5]{Haasler20}.
The Gibbs kernel corresponds to a star shaped graph structure as in Figure~\ref{fig:StructuredMarginal}{(b)}.
The approximation of the color \rev{b}arycenter is now given by the points $x^{(B)}$ with masses $r_B$.}

\begin{rmrk}
 {Proposition 3.4 in~\cite{Haasler20} states that multi-marginal optimal transport problems with star shaped graph structures can be decomposed into several independent two-marginal problems when all marginals are prescribed. 
This does not apply for the computation of~\eqref{eq:BarycenterScalingForm}, since $r_4(\c P_\eta^*)$ is unknown.}
\end{rmrk}

 {We now want to transfer the color from the approximation of the \rev{b}arycenter to the fourth image. 
We define the cost matrix $C_{i,j} = \norm{x^{(B)}_i-x^{( {4})}_j}_2^2$ for $1\leq i,j \leq n$ and Gibbs kernel matrix $ K= \exp(- C/\eta)$.
Let the matrix $P$ denote the approximate solution obtained from Algorithm~\ref{alg:MMSinkhorn} with cost matrix $C$ and marginals $r_1 = r_B$ and $r_2 = \mathbf{1}_n$. 
The color vector of the target image with transferred colors is now given by $x^*_j = \sum_{i=1}^n  P_{ij} x^{(B)}_i$ for $1\leq j \leq n$.
Note that we can transfer the color to several target images without recomputing the approximation of the \rev{b}arycenter.}

To accelerate the computation of marginals, we replace the  {the Gibbs kernel tensor $\c K$ and matrix $K$ by approximations $\tilde{\c K}$ and $\tilde{K}$ obtained by replacing ${\c K}^{(1,4)},{\c K}^{(2,4)},{\c K}^{(3,4)}$  {and $K$} by rank-$r$ approximations using the randomized SVD~\cite{Halko11}. 
Marginals  {and the target color vector $x^*$} can now be computed in $\mathcal{O}(nr)$ operations. 
}

\begin{figure}[!ht]
    \centering
\subfloat[Impact of $r$]{{\includegraphics[scale = 0.6]{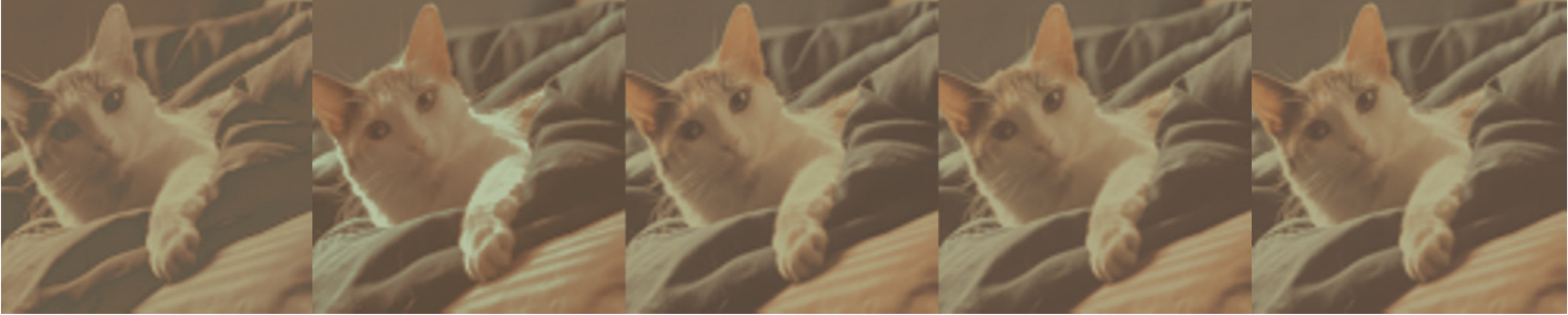}}}%
\qquad
\subfloat[Error decay]{{\includegraphics[scale = 1.1]{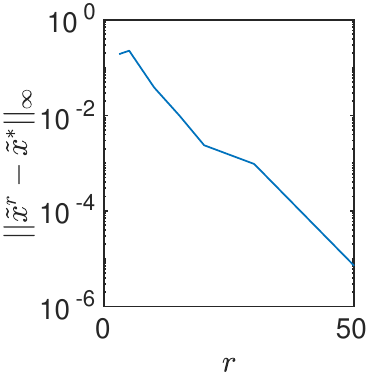}}}%
\qquad
\subfloat[Impact of $\lambda$]{{\includegraphics[scale = 0.6]{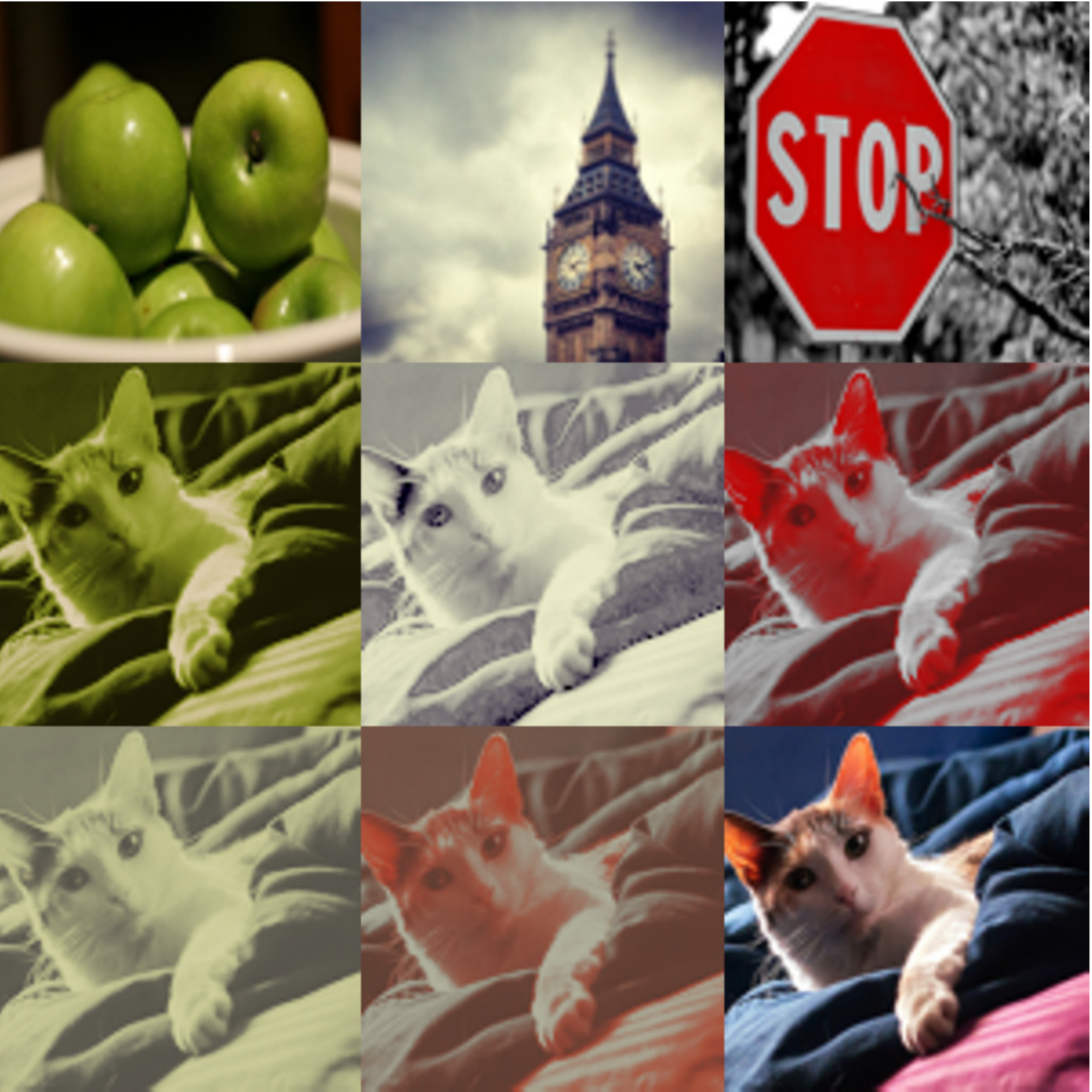}}}%
    \caption{
    Given the images displayed in the top row of (c), we use the method described in Section~\ref{sec:Color} to transfer their color to the bottom right image in (c). 
    For fixed $\lambda  = (1/3,1/3,1/3)$, we plot the resulting images for  {ranks} $r=3,r=5,r=10,r=50$ from left to right in (a). 
    The rightmost picture is obtained by using the full matrices $\c K^{\alpha}$  {and $K$} directly. 
    In (b), we plot $||\tilde{x}^{r} - \tilde{x}^{*}||_\infty$ where $\tilde x^{r}$ denotes the resulting image vector for a given  {rank} $r$ and $\tilde{x}^*$ is computed using the full matrices. 
    Moreover, we display the resulting image for different values of $\lambda$ in (c): middle row left to right $\lambda = (1,0,0),\ \lambda = (0,1,0),\ \lambda=(0,0,1)$, bottom row left $\lambda = (1/3,2/3,0)$, bottom row middle $\lambda = (1/5,1/5,3/5)$.}
    \label{fig:ColorTransfer}
\end{figure}

 {In the following numerical experiments, w}e set $\eta = 1/10$ and compute approximate transport plans $\tilde{\c P},  {\tilde{P}}$ by applying Algorithm~\ref{alg:ApproximateMMSinkhorn}  {to $\tilde{\c K}$ and $\tilde K$}. 
In Figure~\ref{fig:ColorTransfer}, we study the impact of $\lambda$ and $r$ onto the color transferred image for example images from the COCO data set~\cite{Coco14}. We observe that small values of $r$ suffice to accurately approximate the desired image. 
The computation including the assembling of the matrices and the randomized SVD takes $ {0.25}$ seconds for $r=50$, whereas using the  {full} matrices  {in} $\R^{10000\times 10000}$ directly in the tensor network takes $ {7.65}$ seconds, i.e. our low-rank approach reduces the computation time by over $96\%$.

\subsection{ {A tensor network with circles}}
 {
In the following, we describe an optimal transport problem arising in the context of Schrödinger bridges~\cite{Marino20,Haasler20}.
Let $m=5$ and $n = 40^2$. 
We denote by $x^{(i)}$ for $1 \leq i \leq n$ the $i$th grid point on the grid $\{1,\dots,40\}^2$.
Let ${\rev C}_{i,j} = ||x^{(i)} - x^{(j)}||_2^2$ for $1 \leq i,j \leq n$
and ${\rev K} = \exp(- {\rev C}/\eta)$.
We consider the Gibbs kernel 
\begin{equation}\label{eq:SchrödingerBridgeKernel}
    \mathcal{K}_{i_1,i_2,i_3,i_4,i_5} =  \prod_{\alpha \in F} {\rev K}_{I_\alpha} \quad \text{for } 1\leq i_1,i_2,i_3,i_4,i_5 \leq n,
\end{equation}
where $F = \{(1,2),(2,3),(3,4),(4,5)\}$.
Note that this corresponds to a tensor network structure similar to Figure~\ref{fig:StructuredMarginal}(a).
Given $r_1,r_5 \in \Delta^{n}$, we now consider the problem of finding scaling parameters $\beta_1,\beta_5 \in \R^n$ such that 
\begin{equation} \label{eq:SchrödingerBridgeScaling}
    \c{P}_\eta^* = \c K \times_1 \diag(\exp(\beta_1))  \times_2 \diag(\mathbf{1}) \times_3 \diag(\mathbf{1}) \times_4 \diag(\mathbf{1}) \times_5 \diag(\exp(\beta_5))
\end{equation}
satisfies $r_1(\c P_\eta^*) = r_1$ and $r_5(\c P_\eta^*) = r_5$.
In the context of Schrödinger bridges the marginals $r_k(\c P_\eta^*)$ describe how the initial distribution $r_1$ most likely evolved into $r_5$~\cite{Haasler20}. 
As in Section~\ref{sec:Color}, we can again compute approximations of $\c P_\eta^*$ by modifying Algorithm~\ref{alg:ApproximateMMSinkhorn} such that only  $\beta_1$ and $\beta_5$ are updated based on the prescribed marginals $r_1,r_5$\rev{; see~\cite[Section~III.A]{Haasler21c}}.
}

\begin{figure}[!ht]
    \centering
    \includegraphics[scale = 1.1]{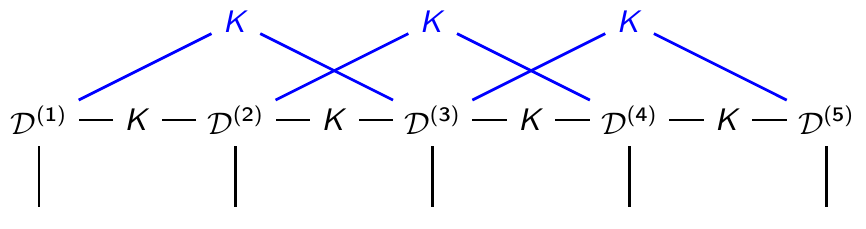}
    \caption{ {Tensor network diagram representation of $\mathcal{K}$ as defined in~\eqref{eq:SchrödingerBridgeKernel}. The black network is obtained for $F = \{(1,2),(2,3),(3,4),(4,5)\}$. The blue part depicts the additional nodes and vertices introduced by setting $F = \{(1,2),(1,3),(2,3),(2,4),(3,4),(3,5),(4,5)\}$.}}
    \label{fig:network15}
\end{figure}

 { 
The Schrödinger bridge problem is based on a Markov chain model, in the sense that each distribution only depends on the previous distribution.
We now introduce additional dependencies by setting $F = \{(1,2),(1,3),(2,3),(2,4),(3,4),(3,5),(4,5)\}$ in Equation~\eqref{eq:SchrödingerBridgeKernel}. 
\rev{The tensor network structure of $\mathcal{K}$ is depicted in Figure~\ref{fig:network15}.}
Note that this results in a graphical model for $\c P_\eta^*$ with window graph structure as in~\cite[Figure~2]{Altschuler20b}.
In Figure~\ref{fig:CircleContraction}, we depict the corresponding tensor network after replacing \rev{each matrix} ${\rev K}$ by a rank-$r$ approximation.
Marginals of this network can be computed in $\mathcal{O}(nr^4)$ operations. 
For instance, to compute $r_3(\c{P}_\eta^{(t)})$ we \rev{first} evaluate the colored tensors $\mathcal T^{[1]},\dots,\mathcal{T}^{[5]}$ in $\mathcal{O}(nr^4)$ operations by contracting the colored edges simultaneously using the structure of $\mathcal{D}^{(k)}$.
We then compute $\c T^{[1,2]} \in \R^{r \times r \times r}$ in $\mathcal{O}(r^4)$ operations by contracting $\mathcal T^{[1]}$ and $\mathcal T^{[2]}$ along their common edge. 
Analogously we compute $\c T^{[4,5]}$. 
We then contract $\c T^{[1,2]}$ and $\c T^{[4,5]}$ along their common edge in $\mathcal{O}(r^5)$ operations before contracting all edges of the resulting tensor simultaneously with $\c T^{[3]}$ in $\mathcal{O}(nr^4)$ operations.
\rev{We want to emphasize that contracting the network without replacing $K$ by low-rank approximations would not be feasible due to the required memory for storing the intermediate tensors.}
}

\begin{figure}[!ht]
    \centering
    \includegraphics[scale = 1.1]{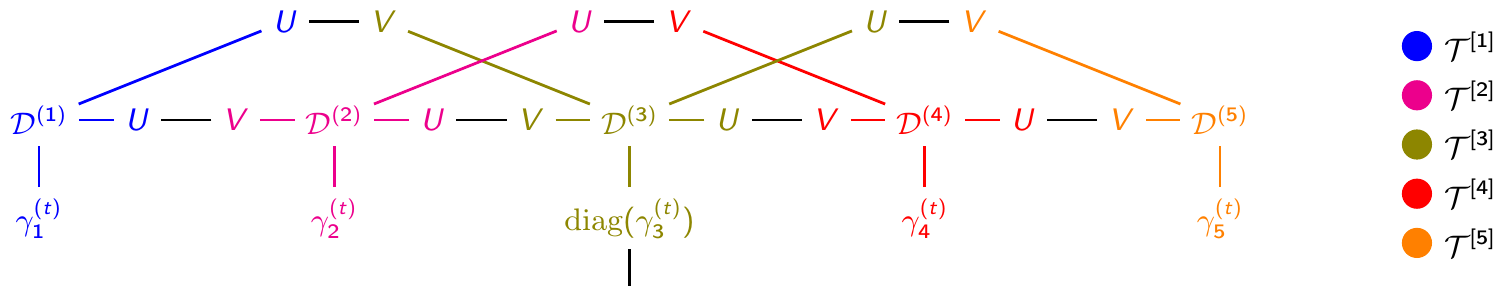}
    \caption{ {Tensor network diagram representation of $r_3(\c{P}_\eta^{(t)})$ corresponding to the graph structure~\eqref{eq:SchrödingerBridgeKernel} with  $F = \{(1,2),(1,3),(2,3),(2,4),(3,4),(3,5),(4,5)\}$. Here, we replace \rev{each} matrix $\rev{ K}$ by ${ U V^T}$ with ${ U, V}\in \R^{n\times r}$. The colors mark subtensors $\c T^{[1]},\dots\c T^{[5]}$ computed during the contraction.}}
    \label{fig:CircleContraction}
\end{figure}

 {
In Figure~\ref{fig:Schrödinger}, we study the impact of introducing these additional dependencies on the marginals $r_k(\c P)$, where $\c P$ denotes the computed approximation of~\eqref{eq:SchrödingerBridgeScaling}.
We observe that the additional dependencies lead to a more concentrated $r_3(\c P)$, in the sense that the mass is less spread out. 
Moreover, the additional dependencies lead to $r_2(\c P)$ and $r_4(\c P)$ being concentrated slightly closer to the center of the images.
}

\begin{figure}
    \centering
    \includegraphics[scale = 1.1]{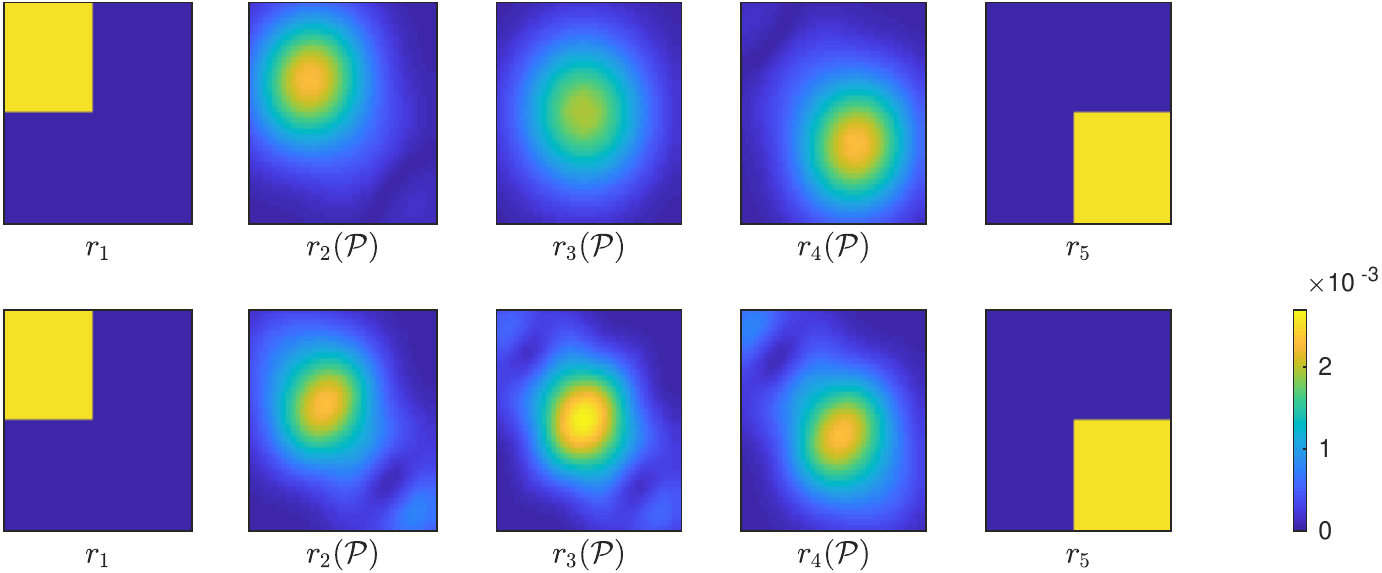}
    \caption{  { We consider the scaling problem~\eqref{eq:SchrödingerBridgeScaling} for $\eta = 0.1$. 
    We obtain an approximation $\tilde{\c K}$ of the Gibbs kernel by replacing $\hat{\c K}$ in~\eqref{eq:SchrödingerBridgeKernel} by a rank-$10$ approximation computed via the randomized SVD. 
    We apply Algorithm~\ref{alg:ApproximateMMSinkhorn} with index selection~\eqref{eq:IndexSelectionFriedland}, where we take $\argmax$ only over the set $\{1,5\}$. 
    Let $\c P$ denote the resulting transport plan.
    In each row we depict the prescribed $r_1,r_5$ as well as $r_2(\c P),r_3(\c P),r_4(\c P)$ reshaped into \rev{$\R^{40 \times 40}$}.
    The top row is obtained using $F = \{(1,2),(2,3),(3,4),(4,5)\}$ as in the Schrödinger bridge setting.
    The bottom row is obtained using $F = \{(1,2),(1,3),(2,3),(2,4),(3,4),(3,5),(4,5)\}$ with additional dependencies.} }
    \label{fig:Schrödinger}
\end{figure}

\begin{rmrk}
 {In this experiment, the graphical model of the transport plan contains circles. 
Hence, we can not apply the belief propagation algorithm directly~\cite{Haasler21c,Fan21}.
There exist generalizations such as the loopy belief propagation algorithm~\cite{Yedidia00}, which does not always converge, and the junction tree algorithm~\cite{Huang96}, which applies the belief propagation on a so-called junction tree.
This junction tree encodes the connection structure of the vertices.
It can be used to determine the contraction order for the corresponding tensor network representation of the Gibbs kernel~\cite[Section~4.2]{Robeva19}.
In particular, the minimal cost to contract the tensor network is bounded from above by the cost of the junction tree algorithm.}
\end{rmrk}
\section{Conclusion}\label{sec:Conclusion}
Multi-marginal optimal transport problems can be solved using the multi-marginal Sinkhorn algorithm, which suffers from the curse of dimensionality unless marginals can be computed efficiently.
In this paper, we analyze how approximations of the Gibbs kernel, which potentially drastically reduce the complexity of computing marginals, affect the solution of the transport problem. 
We demonstrate that the computation of marginals for transport plans defined via graphical models can be accelerated by introducing low-rank approximations in the tensor network representation of the Gibbs kernel.
We show that this approach can be faster and more accurate than direct low-rank approximations of the full Gibbs kernel. 
An application of our method is presented by the drastic reduction of the computation time for transferring colors from several images onto one target image.

In other applications, there are, however, several obstacles to put this into practice.
For instance, in density functional theory, the Coulomb cost leads to zero entries on the diagonal of the Gibbs kernel. 
Any approximation of the Gibbs kernel would need to approximate these entries exactly, which is not feasible without adding a sparse correction term to the low-rank approximations. 
Moreover, the underlying graphical model leads to a tensor network  that can not be contracted efficiently. 
In future work, the use of non-negative low-rank approximations could be of interest. 

\subsubsection*{Acknowledgements}
The authors would like to thank Virginie Ehrlacher for insightful discussions on this work.

\bibliographystyle{siam}
{\footnotesize 
}

\end{document}